\documentclass[a4paper,11pt,reqno]{amsart}
\usepackage[utf8]{inputenc}
\usepackage[T1]{fontenc}

\usepackage{paralist}
\usepackage{letterspace}
\usepackage{booktabs}
\usepackage{ifdraft}
\usepackage{cite}
\usepackage{microtype}
\usepackage{tikz,tikz-cd}
\usepackage{lmodern}
\usepackage{comment}
\usepackage{xcolor}
\ifdraft{\usepackage[notcite,notref]{showkeys}}{}

\usepackage{amsmath,amssymb,amsthm}
\usepackage{mathtools,mathrsfs}
\usepackage{dsfont}
\usepackage{stmaryrd}

\usepackage[hidelinks]{hyperref}

\calclayout

\newcommand{\IC}{\mathbb{C}}
\newcommand{\IE}{\mathbb{E}}
\newcommand{\IN}{\mathbb{N}}
\newcommand{\IP}{\mathbb{P}}
\newcommand{\IR}{\mathbb{R}}
\newcommand{\IZ}{\mathbb{Z}}

\newcommand{\one}{\mathds{1}}

\newcommand{\Geo}{\mathsf{Geo}}
\newcommand{\Ber}{\mathsf{Ber}}
\newcommand{\Exp}{\mathsf{Exp}}
\newcommand{\Poi}{\mathsf{Poi}}

\newcommand{\Beta}{\operatorname{B}}

\newcommand{\mc}{\mathcal}

\newcommand{\ga}{\alpha}
\newcommand{\gb}{\beta}
\newcommand{\gC}{\Gamma}
\newcommand{\gc}{\gamma}

\newcommand{\gep}{\varepsilon}
\newcommand{\gk}{\kappa}

\newcommand{\gl}{\lambda}

\newcommand{\gS}{\Sigma}

\newcommand{\gT}{\Theta}
\newcommand{\go}{\omega}

\newcommand{\dto}{\stackrel{\mathrm{d}}{\longrightarrow}}
\newcommand{\pto}{\stackrel{\mathbb{P}}{\longrightarrow}}

\newcommand{\ssq}{\subseteq}
\newcommand{\Di}{\,\mathrm{d}}
\newcommand{\floor}[1]{\lfloor #1 \rfloor}

\newcommand{\descend}{\stackrel{*}{\longrightarrow}}

\newcommand{\ER}{Erd\H{o}s--R\'enyi\ }

\mathtoolsset{showonlyrefs,showmanualtags}

\newtheorem{lemma}{Lemma}
\newtheorem{theorem}[lemma]{Theorem}

\theoremstyle{definition}

\theoremstyle{remark}
\newtheorem{remark}[lemma]{Remark}

\title{Ancestries in random $d$-DAGs}
\author[F. Burghart]{Fabian Burghart}
\address{Institute for Discrete Mathematics, TU Graz, 8010 Graz, Austria}
\email{fabian.burghart@tugraz.at}
\thanks{The author wishes to thank Svante Janson for his insightful comments on the manuscript, in particular for pointing out a simplification to the proof of Theorem~\ref{thm:technical}. This research was funded in part by the Austrian Science Fund (FWF) [10.55776/F1002]}
\date{}
\keywords{}
\subjclass[2020]{60C05 (Primary); 60F05, 05C20, 05C80 (Secondary)}

\begin{document}

\begin{abstract}
 We consider a random recursive DAG $G_n$ on the vertex set $[n]$ where every vertex $i\geq 2$ has out-degree $d$, with the targets chosen uniformly at random among the earlier $i-1$ vertices. For this model, we propose a novel way to investigate the descendants of $n$ (which have recently been studied in a paper by Janson) through what we call \emph{ancestry processes}. The ancestor process $a_i(n)$ of a vertex $i$ is defined as the number of ancestors of $i$ in $G_n$, and is closely related to the evolutions of multi-draw P\'olya urns. Results on the descendants can then be obtained via asymptotic results on functionals of the ancestry processes, generally leading to technical integral expressions. This method yields the answer to two questions posed by Janson, the first on the size of the joint descendants of vertices $n$ and $n+1$, and the other on the location of the earliest non-descendant. We further prove limit theorems for the ancestry processes $a_i(n)$ depending on $i$, determine the location of the earliest source node, and provide an alternative proof of a first-moment result contained in Janson's work.
\end{abstract}

\maketitle


\section{Introduction and Main Results}
\label{sec:intro}

Denote by $G_n$ the random \emph{directed acyclic graph} (or \emph{DAG}, for short) on the vertex set $[n]$ constructed as follows: For each $2\leq j\leq n$, we draw $d\geq 1$ arrows starting in vertex $j$ to vertices in $[j-1]$ chosen uniformly at random and independently of everything else.
In particular, this means that vertex $1$ is the unique sink of the DAG, vertex $n$ is a source (but possibly not the only one), and all vertices $2\leq j\leq n$ have outdegree $d$. We also note that we can couple the $G_n$ together to a random DAG-process, by adding new vertices one at a time and connecting them with $d$ outgoing arrows to the previous graph. In fact, we will extensively rely on this construction as a \emph{recursive $d$-DAG}.

For $d=1$, this DAG yields the well-studied \emph{random recursive tree}, see e.g. \cite[Chapter~6]{Drm2009}, up to our unusual orientation of the arrows. We will assume $d\geq 2$ throughout. This case too has received significant attention over the past 30 years, often motivated by interpreting the DAG as a random circuit. Indeed, questions regarding the depth and distances between vertices have been studied extensively, as in \cite{AGM1999,BF2012,DJ2011,TX1996}. The number and distribution of source nodes, corresponding to output gates in the random circuit model, has been investigated in \cite{KS2017,MPSU2014,TM2001}; more general questions about vertices of fixed degree or the maximal degree have been addressed in \cite{MT2002} or \cite{DL1995}, respectively.    More recently, Stein's method was successfully employed to derive normal approximations for subgraph counts in $G_n$, in \cite{BHJL2025}.

This paper continues a line of inquiry started in \cite{Jan2023}, which requires some notation. We write $i\longrightarrow j$ to indicate that there is an arrow from $i$ to $j$, and we write $i\descend j$ to indicate that $j$ can be reached from $i$ by a directed path.
For $n\in \IN$, let
\begin{equation}\label{eq:defDn}
 D_n:=\{j\in[n]: n\descend j\}
\end{equation}
denote the set of \emph{descendants} of $n$; and for $i\in[n]$, let
\begin{equation}\label{eq:defAn}
 A_i(n):=\{j\in[n]:j\descend i\}
\end{equation}
denote the set of \emph{ancestors} of $i$ in $G_n$. It will be notationally convenient later to also allow $i>n$, in which case of course $A_i(n)=\emptyset$.

Janson's result in \cite{Jan2023}, motivated by \cite[Problem~281]{Knu2024}, was that the limiting distribution of $|D_n|$ under suitable rescaling is a power of a gamma distribution; to be precise, he obtained the following theorem through a combination of a graph exploration and sophisticated martingale methods:
\begin{theorem}[{\cite[Theorem~1.4]{Jan2023}}]\label{thm:Janson}
 Let $d\geq 2$. Then, as $n\to\infty$, both in distribution and for all moments,
 \begin{equation}\label{eq:Janson}
  n^{-\frac{d-1}{d}}|D_n| \to \frac{(d-1)^{1/d}\pi}{d\sin(\pi/d)} \gc^{1/d},
 \end{equation}
 where $\gc$ is a gamma-distributed random variable with parameters $\frac{d}{d-1}$ and 1, thus having density $\gC\left(\frac{d}{d-1}\right)^{-1} x^{-\frac{1}{d-1}} e^{-x}$ for $x>0$. In particular, this means
 \begin{equation}\label{eq:thmSvante}
  n^{-\frac{d-1}{d}} \IE |D_n|
  \to \frac{(d-1)^{1/d} \pi}{d\sin(\pi/d)} \cdot \frac{\gC\left(\frac{d}{d-1}+\frac{1}{d}\right)}{\gC\left(\frac{d}{d-1}\right)} \qquad \text{as }n\to\infty.
 \end{equation}
\end{theorem}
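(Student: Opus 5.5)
The plan is to follow Janson's strategy: explore the descendant set of $n$ backwards in time and analyse the exploration with martingale methods. Reveal the arrows of vertices $n, n-1, \dots$ in decreasing order. Let $X_k$ be the number of arrows that start in $D_n \cap \{k+1,\dots,n\}$ and end in $[k]$, counted with multiplicity. Vertex $k$ belongs to $D_n$ exactly when at least one of these $X_k$ arrows points to $k$. Given the past, each such arrow lands on $k$ with probability $1/k$, and the arrows are independent. Hence $\IP(k\in D_n\mid \mathcal F)=1-(1-1/k)^{X_k}$. If $k\in D_n$, then $X_{k-1}=X_k-N_k+d$, where $N_k\sim\mathrm{Bin}(X_k,1/k)$; otherwise $X_{k-1}=X_k$. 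This gives a time-inhomogeneous Markov chain, started at $X_{n-1}=d$, and $|D_n|$ is the number of hits.

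The first step is the drift computation. With the time change $t=\log(n/k)$, the chain satisfies
\[
\IE[\Delta X]\approx\bigl(1-e^{-X/k}\bigr)\,(d - X/k).
\]
While $X\ll k$, this is approximately $dX/k$, so $X$ grows like a Pólya/Yule process. More precisely, $X_k\approx W\cdot (n/k)^{\,d/(d-1)}$-type growth holds in the early phase. Here $W$ is an almost sure martingale limit, and its law is Gamma with shape $d/(d-1)$: the process is a pure-birth chain adding $d-1$ net arrows per hit, started from $d$. The second phase begins once $X_k$ becomes comparable to $k$, which happens at $k\asymp n^{(d-1)/d}$ on the appropriate scale. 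There I would show that $Y=X_k/k$ concentrates around the solution of a deterministic ODE, with fluctuations negligible by a second martingale estimate. The ODE solution drives $Y$ to the fixed point $Y=d$ with $X/k\to d$.

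Next I count $|D_n|$. The number of hits at scale $k$ is approximately $\sum_k (1-e^{-X_k/k})$. The early phase contributes $o(n^{(d-1)/d})$ terms. The transition region $k\asymp W^{1/d}n^{(d-1)/d}$ contributes order $n^{(d-1)/d}$. The tail, where $X_k/k\approx d$, contributes $\sum(1-e^{-d})\approx k$-many terms. Summing the deterministic ODE contribution gives
\[
|D_n|\approx c\,k^*,\qquad k^*=(\text{const})\,W^{1/d}n^{(d-1)/d}.
\]
This yields $n^{-(d-1)/d}|D_n|\to C\gc^{1/d}$. The constant $C=\frac{(d-1)^{1/d}\pi}{d\sin(\pi/d)}$ should come from an integral of the ODE of the form $\int_0^\infty (1+u^d)^{-1}du=\frac{\pi}{d\sin(\pi/d)}$, after normalizing $W$ to $\gc$ by a factor $(d-1)$.

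Finally, moment convergence follows from uniform integrability. I would bound $\IE |D_n|^p \le C_p n^{p(d-1)/d}$ by dominating the chain by the pure-growth comparison $X_{k-1}\le X_k+(d-1)\mathbf 1\{\text{hit}\}$. I would then use moment bounds on the corresponding Pólya urn, whose moments are explicit Gamma ratios; this also gives \eqref{eq:thmSvante}.

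The main obstacle is gluing the two phases. One must show that the random limit $W$ from the early phase correctly seeds the deterministic ODE in the transition window, uniformly enough to interchange limits. I would handle this with a stopping time at $X_k=\varepsilon k$, then let $\varepsilon\to0$ after $n\to\infty$. The quadratic-variation bounds needed for this come from the conditional variance $\approx X/k$.
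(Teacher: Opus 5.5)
You take the exploration-and-martingale route of the cited original proof, which the paper does not follow. The paper never reproves the distributional limit (it is quoted). It obtains only \eqref{eq:thmSvante}, from three ingredients: the exact ancestry-chain formula of Lemma~\ref{lemma:firstmoment}, the dominated-convergence Theorem~\ref{thm:technical}, and a Beta-integral evaluation by analytic continuation. Your route could give the whole theorem. As written, however, its central deterministic step is wrong, so the constant is asserted rather than derived.

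The drift is $\IE[X_{k-1}-X_k\mid X_k]=d\bigl(1-(1-1/k)^{X_k}\bigr)-X_k/k$, not $(1-e^{-X/k})(d-X/k)$. For $X_k\ll k$ it is $(d-1)X_k/k$. Hence $X_k\approx(d-1)\gc\,(n/k)^{d-1}$ with $\gc$ Gamma$\bigl(\tfrac{d}{d-1},1\bigr)$-distributed. The exponent is $d-1$, not $d/(d-1)$; only $d-1$ matches your own transition scale $n^{(d-1)/d}$.

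More seriously, $y_k=X_k/k$ has no fixed point. One has $\IE[y_{k-1}-y_k\mid X_k]\approx d(1-e^{-y_k})/k>0$, i.e.\ $k\,\frac{dy}{dk}=-d(1-e^{-y})$. You have conflated stationarity of $X$ with stationarity of $X/k$: since $k$ decreases, $y$ keeps growing. Solving and matching with the first phase gives $e^{y_k}-1=(d-1)\gc\,n^{d-1}k^{-d}$, hence
\[
 \IP(k\in D_n\mid\gc)\approx\Bigl(1+\frac{k^d}{(d-1)\gc\,n^{d-1}}\Bigr)^{-1}.
\]
The substitution $k=\bigl((d-1)\gc\bigr)^{1/d}n^{(d-1)/d}u$ turns the sum over $k$ into $\bigl((d-1)\gc\bigr)^{1/d}n^{(d-1)/d}\int_0^\infty(1+u^d)^{-1}\Di u$. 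That is where your integral actually comes from.

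Under your fixed point $X_k/k\to d$, vertices $k=o(n^{(d-1)/d})$ would be non-descendants with probability about $e^{-d}$. This changes the constant. It also contradicts Lemma~\ref{lemma:notDn}, whose right-hand side is at most $M^d$; in fact it equals $\IE\bigl[M^d/(M^d+(d-1)\gc)\bigr]$, exactly what the corrected ODE predicts. It contradicts Theorem~\ref{thm:threshold} as well.

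Your moment bound also has a missing step. P\'olya-urn (Gamma-ratio) moments control $X_k$, which is of order $(n/k)^{d-1}$. They bound the number of hits only while $X_k\lesssim k$; carried down to $k=1$ they give only a useless bound of order $n^{d-1}$. Moreover, the urn with hit probability $X/k$ is not even defined once $X_k>k$.

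The fix is to cut at $K=t\,n^{(d-1)/d}$, so that $|D_n|\le K+1+\#\{\text{hits in }(K,n)\}$. Since $X_{k-1}\le X_k+(d-1)N_k$ with $N_k\sim\mathrm{Bin}(X_k,1/k)$, a monotone coupling bounds the hits in $(K,n)$ by $\hat X_K/(d-1)$. Here $\hat X$ is the branching chain $\hat X_{k-1}=\hat X_k+(d-1)\mathrm{Bin}(\hat X_k,1/k)$ with $\hat X_{n-1}=d$, which satisfies $\IE\hat X_K^p\le C_p(n/K)^{p(d-1)}$. Markov's inequality then gives $\IP\bigl(|D_n|>2t\,n^{(d-1)/d}+1\bigr)\le C_p t^{-pd}$ for every $p$. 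That is the uniform integrability you need, and \eqref{eq:thmSvante} then follows from the distributional limit via $\IE\gc^{1/d}=\gC\bigl(\tfrac{d}{d-1}+\tfrac1d\bigr)/\gC\bigl(\tfrac{d}{d-1}\bigr)$.

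Finally, gluing the random seed $\gc$ into the ODE, which you rightly flag as the main obstacle, is still only a plan. The paper avoids all of this for the mean, because the exact law of $a_i(n)$ in Lemma~\ref{lemma:aindistr} reduces $\IE|D_n|$ to a deterministic Riemann sum.
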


Subsequently, Janson posed the following two problems:
\begin{enumerate}[(i)]
 \item\label{item:q1} Is it true that $n^{-\frac{d-1}{d}} \IE|D_n\cap D_{n+1}|$ converges to a constant $\nu>0$? What is $\nu$? What is the asymptotic distribution of $n^{-\frac{d-1}{d}} |D_n\cap D_{n+1}|$, assuming it exists? (\cite[Problem~9.7]{Jan2023}, albeit only for $d=2$)
 \item\label{item:q2} What is the smallest $i\notin D_n$? (During his plenary talk at the Random Structures and Algorithms conference in Vienna, August 2025).
\end{enumerate}
The motivation of this paper is to answer both of these questions -- we give a closed expression for $\nu$ in Theorem~\ref{thm:jointdesc} and a threshold result for the earliest non-descendant in Theorem~\ref{thm:threshold}.

The proofs of these theorems rely on the \emph{ancestry process} of a given vertex $i$, which we will define as $a_i(n):=|A_i(n)|$, combined with the simple observation that
\begin{equation}\label{eq:keyobs}
 i\in D_n \quad \text{ if and only if } \quad a_i(n)=1+a_i(n-1).
\end{equation}
It turns out that the ancestry process has several nice properties, which we will obtain in Section~\ref{sec:ancestry}, and which will allow us to write expectations such as $\IE |D_n|$ or $\IE |D_n\cap D_{n+1}|$ as explicit sums over probabilities of events for ancestry processes; cf. Lemmas~\ref{lemma:aindistr} and \ref{lemma:firstmoment} below. In Section~\ref{sec:technical} we prove a technical theorem (Theorem~\ref{thm:technical}) to evaluate the asymptotics of such sums  in terms of integrals over a first-order approximation; and then evaluate the resulting integrals in Section~\ref{sec:integrals} to obtain the limit in \eqref{eq:thmSvante} and the constant $\nu$ in Theorem~\ref{thm:jointdesc}.

Our methods also yield limit laws for $a_i(n)$ for the full range of $1\leq i\leq n$ in Theorem~\ref{thm:limit}, as well as a threshold result for the earliest source node, cf. Theorem~\ref{thm:sources}.

We suspect that our proof methods, after overcoming significantly more technical difficulties, will also give the convergence of higher moments needed to turn Theorem~\ref{thm:jointdesc} into a statement on convergence in distribution through the method of moments, thus answering Problem (\ref{item:q1}) in its entirety. It might even extend to the related model of DAGs obtained through preferential attachment as in \cite{JL2024}, but we have not pursued this line of inquiry.

\subsection{Main results}

\begin{theorem}\label{thm:jointdesc}
 For fixed $d\geq 2$, we have
 \begin{equation}\label{eq:thmjointdesc}
  n^{-\frac{d-1}{d}} \IE |D_n\cap D_{n+1}|
  \to \frac{(d-1)^{1/d} \pi}{d\sin(\pi/d)} \left[ \frac{2\gC\left(\frac{d}{d-1}+\frac{1}{d}\right)}{\gC\left(\frac{d}{d-1}\right)} - \frac{\gC\left(\frac{2d}{d-1}+\frac{1}{d}\right)}{\gC\left(\frac{2d}{d-1}\right)} \right]
  \qquad \text{as }n\to\infty.
 \end{equation}
\end{theorem}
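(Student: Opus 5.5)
We reduce the statement to asymptotics of a single sum over $i$ of ancestry-process probabilities, and then feed that sum into the machinery already built for $\IE|D_n|$. The starting point is the key observation \eqref{eq:keyobs}. It gives $i\in D_n$ iff $a_i(n)=1+a_i(n-1)$, and $i\in D_{n+1}$ iff $a_i(n+1)=1+a_i(n)$. Linearity of expectation then yields
\begin{equation}
 \IE|D_n\cap D_{n+1}| = \sum_{i=1}^{n} \IP\bigl(a_i(n)=1+a_i(n-1),\ a_i(n+1)=1+a_i(n)\bigr).
\end{equation}

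The ancestry process is a Markov chain, closely tied to a multi-draw Pólya urn. Given $a_i(m-1)=k$, vertex $m$ becomes an ancestor of $i$ iff at least one of its $d$ uniform targets lands among the $k$ ancestors in $[m-1]$. This has probability $p_m(k)=1-(1-k/(m-1))^d$. Conditioning on $a_i(n-1)=k$ gives
\begin{equation}
 \IE|D_n\cap D_{n+1}| = \sum_{i}\IE\bigl[p_n(a_i(n-1))\,p_{n+1}(a_i(n-1)+1)\bigr].
\end{equation}
By the same inclusion--exclusion as in Lemma~\ref{lemma:firstmoment}, this equals
\begin{equation}
 2\,\IE|D_n| - \sum_i \IE\bigl[1-(1-p_n)(1-p_{n+1})\bigr] + o(n^{(d-1)/d}).
\end{equation}
Rearranged, the remaining term is the expected number of $i$ reached from $n$ or from $n+1$.

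Write $x=a_i(n-1)/n$. Up to $O(1/n)$ corrections, $(1-p_n)(1-p_{n+1})\approx(1-x)^{2d}$, so $1-(1-p_n)(1-p_{n+1})\approx 1-(1-x)^{2d}$. This is exactly the single-step probability for a vertex with $2d$ out-arrows. It reduces the whole problem to computing
\begin{equation}
 \sum_i \IE f\bigl(a_i(n-1)/n\bigr) \quad\text{for}\quad f(x)=1-(1-x)^{d}\ \text{and}\ f(x)=1-(1-x)^{2d}.
\end{equation}
The case $f(x)=1-(1-x)^d$ is the first-moment computation behind \eqref{eq:thmSvante}.

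Next we apply Theorem~\ref{thm:technical}, which converts sums $\sum_i\IE f(a_i(n-1)/n)$ into $n^{(d-1)/d}$ times an integral over the first-order (limit-law) approximation of $a_i(n)$ from Lemma~\ref{lemma:aindistr} and Theorem~\ref{thm:limit}. In that approximation, for $i=tn$ with $t\to 0$ at the scale $n^{-1/d}$, $a_i(n)/n$ is essentially a deterministic function of a gamma-type variable. This is what produces the $\gC(\cdot)/\gC(\cdot)$ ratios. The analysis of Section~\ref{sec:integrals} for the $d$-arrow functional gives $\frac{(d-1)^{1/d}\pi}{d\sin(\pi/d)}\frac{\gC(\frac{d}{d-1}+\frac1d)}{\gC(\frac{d}{d-1})}$. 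Repeating it with exponent $2d$ should shift the gamma parameter from $\frac{d}{d-1}$ to $\frac{2d}{d-1}$. The reason is that $(1-x)^{2d}$ behaves like $e^{-2dx}$, and the relevant Laplace-type integral produces a parameter proportional to the exponent over $d-1$. The prefactor $\frac{(d-1)^{1/d}\pi}{d\sin(\pi/d)}$, which comes from the $n^{-1/d}$ scaling of the process, is unchanged. Subtracting the two integrals gives \eqref{eq:thmjointdesc}.

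The main obstacle is verifying the hypotheses of Theorem~\ref{thm:technical} for $f(x)=1-(1-x)^{2d}$ together with the product of two consecutive transition probabilities. One task is to control the errors from replacing $p_n\,p_{n+1}$ by a function of $a_i(n-1)/n$ alone, uniformly in $i$. This matters most for small $i$, where $a_i(n-1)$ is close to $n$ and both probabilities are near $1$. We handle that region by the crude bound $\IP(i\in D_n)\le1$ on $i\le\gep n^{1/d}$-type ranges, which contribute $o(n^{(d-1)/d})$. The other task is the explicit integral evaluation with the $2d$ exponent. There we would redo the substitution of Section~\ref{sec:integrals} and check that the Beta-function identity again produces $\gC(\frac{2d}{d-1}+\frac1d)/\gC(\frac{2d}{d-1})$.
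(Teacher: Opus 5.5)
Your proposal is correct and is essentially the paper's proof. Like you, the paper writes $\IE|D_n\cap D_{n+1}|$ exactly via the Markov property and Lemma~\ref{lemma:aindistr} (this is Lemma~\ref{lemma:firstmoment}), applies Theorem~\ref{thm:technical}, and evaluates the limit integral through $v=(1-\gb)^{d-1}$ and analytically continued Beta functions; the only difference is that you use $\bigl(1-(1-\gb)^d\bigr)^2=2\bigl(1-(1-\gb)^d\bigr)-\bigl(1-(1-\gb)^{2d}\bigr)$ already at finite $n$ and reuse the $\IE|D_n|$ asymptotics, whereas the paper keeps the squared kernel and expands only at the Beta-function stage.

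Two small corrections. First, the shift to $\frac{2d}{d-1}$ comes simply from $(1-\gb)^{2d}=v^{2d/(d-1)}$ under that substitution, not from an $e^{-2dx}$ heuristic. Second, the step you flag as the main obstacle is not really one: Theorem~\ref{thm:technical} accepts the exact kernel, so you need no replacement by a function of $a_i(n-1)/n$ and no separate treatment of small $i$, and that kernel's bracket is $O(j/n)$, which gives \eqref{eq:qnijest} at once.
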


For example, in the special case $d=2$, the moment convergence in \eqref{eq:Janson} yields $n^{-1/2} \IE |D_n| \to 3\pi^{3/2}/8$, whereas we obtain from \eqref{eq:thmjointdesc} that
\[
 n^{-1/2} \IE |D_n\cap D_{n+1}| \to \frac{13\pi^{3/2}}{64}.
\]

\begin{remark}
 It follows from the proof of Theorem~\ref{thm:jointdesc} that the expected number of joint descendants of $m$ different vertices, all within $\{n,n+1,\dots,n+M\}$ where $M$ is a constant, scaled by $n^{-(d-1)/d}$, converges to
 \begin{equation}\label{eq:multijoint}
  \frac{(d-1)^{1/d} \pi}{d\sin(\pi/d)} \sum_{i=1}^m (-1)^{1+i}\binom{m}{i} \frac{\gC\left(\frac{id}{d-1}+\frac{1}{d}\right)}{\gC\left(\frac{id}{d-1}\right)}.
 \end{equation}
\end{remark}

\begin{remark}
 After noting that $\IE|D_n\cap D_{n+1}| = \IE|D_n| + \IE|D_{n+1}| - \IE|D_n\cup D_{n+1}|$ and applying \eqref{eq:thmSvante}, a comparison with \eqref{eq:thmjointdesc} reveals that 
 \begin{equation}
  n^{-\frac{d-1}{d}} \IE|D_n\cup D_{n+1}| \to \frac{(d-1)^{1/d}\pi}{d\sin(\pi/d)} \frac{\gC\left(\frac{2d}{d-1}+\frac{1}{d}\right)}{\gC\left(\frac{2d}{d-1}\right)}.
 \end{equation}
 A more general application of the inclusion-exclusion principle to \eqref{eq:multijoint} inductively yields an analogous interpretation to the summands there. 
\end{remark}

\begin{remark}
 In contrast to Theorem~\ref{thm:jointdesc}, the number of joint descendants for $d=1$ converges to a geometric distribution on the positive integers with $p=\frac{1}{2}$; cf. \cite[Theorem~7]{KW2010}.
\end{remark}

As an offshoot of the proof of Theorem~\ref{thm:jointdesc}, we also obtain an alternative proof of \eqref{eq:thmSvante} in Section~\ref{ssec:EDn}, as well as the following theorem on the ancestry of an arbitrary vertex $i$:

\begin{theorem}\label{thm:limit}
 As $n\to\infty$, the number of ancestors of $i$ in $G_n$ satisfies the following:
 \begin{enumerate}[(i)]
  \item\label{it:prop1} For $i=o\big(n^{(d-1)/d}\big)$, we have $n^{-1} a_i(n)\pto 1$.
  \item\label{it:prop2} For $i\sim \ga n^{(d-1)/d}$ with $\ga>0$, we have $n^{-1} a_i(n) \dto \Xi_\ga$, where $\Xi_\ga$ is a random variable on $[0,1]$ with cumulative distribution function
  \begin{equation}\label{eq:thmlimitXi}
   \IP(\Xi_\ga\leq \gc) = 1 - e^{-\frac{\ga^d}{d-1}\big((1-\gc)^{1-d}-1\big)}.
  \end{equation}
  Moreover, the convergence also holds for all moments.
  \item\label{it:prop3} For $i=\go\big(n^{(d-1)/d}\big)$ and $i=o(n)$, we have $(i/n)^d a_i(n)\dto \Exp(1)$, where the convergence also holds for all moments.
  \item\label{it:prop4} For $i\sim \gb n$ with $0<\gb \leq 1$, we have $a_i(n)\dto \Geo(\gb^d)$, again with convergence of all moments.
 \end{enumerate}
\end{theorem}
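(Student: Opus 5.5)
We would study $a_i(n)$ as a Markov chain in $n$. Vertex $n+1$ is an ancestor of $i$ exactly when at least one of its $d$ arrows lands in $A_i(n)$. Hence, given $a_i(n)=k$,
\begin{equation}
 \IP\big(a_i(n+1)=k+1 \,\big|\, \mc F_n\big) = 1-\Big(1-\frac{k}{n}\Big)^d, \qquad a_i(i)=1 ,
\end{equation}
and otherwise $a_i(n+1)=k$. This is a multi-draw P\'olya urn.

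\textbf{Linear regime, parts (\ref{it:prop3}) and (\ref{it:prop4}).} While $k/n$ is small, the step probability is $dk/n+O(k^2/n^2)$. On the time scale $t=\log n$ this is a Yule process with rate $d$.
\begin{itemize}
 \item For part (\ref{it:prop4}), $a_i(n)$ stays $O_{\IP}(1)$, so the quadratic correction is negligible. The number of ancestors is then the Yule process at time $d\log(n/i)\to d\log(1/\gb)$, which is $\Geo(e^{-d\log(1/\gb)})=\Geo(\gb^d)$. Concretely, I would sandwich the chain between two exact P\'olya urns with step probabilities $dk/n$ and $dk/n-\binom d2k^2/n^2$. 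I would check that the probability the two differ is $O(1/n)$ on the event $a_i(n)\le \log n$, and that the complement of this event has vanishing probability with geometric tails. These tails also give moment convergence.
 \item For part (\ref{it:prop3}) I would use the same comparison up to the time $m$ at which $a_i$ reaches $n^{1/3}$, say. The normalized Yule martingale $(i/m)^d a_i(m)$ converges in $L^2$ to $\Exp(1)$, via the classical beta/gamma computation for P\'olya urns with $d$ draws. To continue up to $n$, I would use $W_m:=i^d m^{-d}\,\phi(a_i(m)/m)$ with $\phi(x)=\big((1-x)^{1-d}-1\big)/(d-1)$.
\end{itemize}

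\textbf{Why $W_m$ is the right quantity.} This choice is dictated by the ODE. With $x=a/n$ and $t=\log n$, one has $\dot x = 1-(1-x)^d-x$. Setting $u=(1-x)^{1-d}$ gives $\dot u=(d-1)(u-1)$, so $\big((1-x)^{1-d}-1\big)n^{1-d}$ is conserved. Since $\phi(x)\sim x$ as $x\to0$, we get $W_m\approx (i/m)^d a_i(m)$ early on. The key steps are as follows.
\begin{enumerate}[(a)]
 \item Compute $\IE[W_{m+1}-W_m\mid\mc F_m]$ by Taylor expansion. Show that the drift is $O(W_m/m^2)$ and that the conditional variance is $O(i^{2d}m^{-2d}\phi'(x)^2)$. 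Both are summable once $a_i(m)\ge n^{1/3}$, with total contribution $o(1)$ relative to $W$.
 \item Conclude that $W_n - W_{m_0}\to 0$ in probability, where $m_0$ is the end of the Yule phase. Hence $W_n\dto\Exp(1)$ in every regime where $i\to\infty$.
 \item Invert. If $i=o(n)$ and $i=\go(n^{(d-1)/d})$, then $a_i(n)/n\to0$, so $\phi(x)\sim x$ and $W_n\sim (i/n)^d a_i(n)$, which is part (\ref{it:prop3}). If $i\sim\ga n^{(d-1)/d}$, then $W_n\approx \ga^{d}\phi(a_i(n)/n)$. Since $\phi$ is increasing,
 \begin{equation}
  \IP(a_i(n)/n\le\gc)\to\IP\Big(\Exp(1)\le \tfrac{\ga^d}{d-1}\big((1-\gc)^{1-d}-1\big)\Big),
 \end{equation}
 which is exactly \eqref{eq:thmlimitXi}. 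If $i=o(n^{(d-1)/d})$, then $W_n$ is tight only if $\phi(a_i(n)/n)=O_{\IP}(n^{d-1}/i^d)\to\infty$ in the relevant sense, which forces $a_i(n)/n\pto1$. This is part (\ref{it:prop1}); alternatively it follows by monotonicity in $\ga$, letting $\ga\to0$.
 \item Moment convergence. In part (\ref{it:prop2}) it is automatic because $a_i(n)/n\in[0,1]$. In part (\ref{it:prop3}), stochastic domination of $a_i(n)$ by the pure Yule/P\'olya chain with step probability $dk/n$ gives uniformly bounded moments of $(i/n)^da_i(n)$.
\end{enumerate}

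\textbf{Main obstacle.} The hardest part is the transition between the linear phase and the nonlinear phase. There the Yule approximation must be accurate to relative error $o(1)$ while $a_i$ is still small. At the same time $W_m$ must be concentrated after this point, even though near $x\to1$ the factor $\phi'(x)=(1-x)^{-d}$ blows up. I would handle $x$ close to $1$ by stopping when $1-a_i(m)/m<\gep$. This costs only an $\gep$-error in the distribution function $\IP(\Xi_\ga\le\gc)$ for $\gc<1-\gep$, and we would then let $\gep\to0$.
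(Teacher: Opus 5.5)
Your route is genuinely different from the paper's, and its overall structure is sound once a few slips are fixed. The paper uses no martingales and no Yule comparison. Lemma~\ref{lemma:aindistr}, read through Remark~\ref{rem:stopping}, gives the exact law of $a_i(n)$ as the first success time $T$ of independent Bernoulli trials with probabilities $((i-1)/(n-j))^d$. Each part then reduces to elementary asymptotics of an explicit product:
(i) is the union bound $\IP(T<(1-\gep)n)\le n\,(i/(\gep n))^d$;
(ii) is a Riemann sum plus dominated convergence, as in Section~\ref{sec:technical};
(iii) is $\IP(T>x(n/i)^d)=\prod_k[1-((i-1)/(n-k))^d]\to e^{-x}$;
(iv) is the pointwise limit $p_{n,i}(j)\to\gb^d$.
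Your conserved quantity is exactly the continuum form of the exponent in that product, because $\sum_{k\le j}(n-k)^{-d}\approx\big((n-j)^{1-d}-n^{1-d}\big)/(d-1)=n^{1-d}\phi(j/n)$. So Lemma~\ref{lemma:aindistr} states outright that $i^dn^{1-d}\phi(a_i(n)/n)$ is approximately $\Exp(1)$, which is what your Yule phase and near-martingale steps have to establish. The paper's proof is far shorter. Yours needs no closed form, so it is the natural tool where none exists, e.g.\ for the joint process $a_I(n)$ of Remark~\ref{rem:highermoments}.

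As written, several steps need repair.

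\textbf{(1) Normalization.} $W_m$ must be $i^dm^{1-d}\phi(a_i(m)/m)=i^d\big((m-a_i(m))^{1-d}-m^{1-d}\big)/(d-1)$, as your own ODE computation shows. With $m^{-d}$ the quantity decays like $1/m$, and $W_m\approx(i/m)^da_i(m)$ is off by a factor $m$. Your step (c) in fact already uses $m^{1-d}$.

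\textbf{(2) Variance.} The conditional variance must keep the Bernoulli factor $p(1-p)$, where $p=1-(1-x)^d$. Without it, in regime (ii) with $a_i(m_0)=n^{1/3}$, the tail $\sum_{m\ge m_0}i^{2d}m^{-2d}$ is of order $W^2m_0/a_i(m_0)^2$. That is order one for $d=2$ and divergent for $d\ge3$, so it is not $o(W^2)$. Keeping $p\approx dx$ gives relative variance $O(1/a_i(m_0))$. Keeping $1-p=(1-x)^d$ shows the per-step relative variance near $x=1$ is only $O((1-x)^{d-2}/m^2)$. So $x\to1$ is not actually an obstacle, and the $\gep$-stopping is not needed.

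\textbf{(3) Part (i).} Tightness of $W_n$ bounds $\phi$ from above, which is the wrong direction, and no $\Exp(1)$ limit is available for bounded $i$. Use your monotonicity alternative, made precise by the monotone coupling of the chain. Since $a_{i'}(i)\ge1=a_i(i)$ for $i'<i$, $a_i(n)$ stochastically dominates $a_{\floor{\ga n^{(d-1)/d}}}(n)$ for large $n$. Then let $\ga\to0$ in (ii).

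\textbf{(4) Yule phase.} End it at a deterministic $m_0$ with $(m_0/i)^d\to\infty$ slowly, not at a hitting time. Note also that $W_n\dto\Exp(1)$ requires $i=o(n)$, not just $i\to\infty$. A clean way to get the $\Exp(1)$ limit uses rising factorials $x^{\overline r}=x(x+1)\cdots(x+r-1)$. One has $\IE[a_i(m+1)^{\overline r}-a_i(m)^{\overline r}\mid\mc F_m]=\frac{dr}{m}a_i(m)^{\overline r}\big(1-O(a_i(m)/m)\big)$. Hence $\IE[a_i(m)^{\overline r}]=(1+o(1))\,r!\prod_{k=i}^{m-1}(1+dr/k)$ whenever $m^{d-1}=o(i^d)$. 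This yields:
\begin{itemize}
\item $(i/m_0)^da_i(m_0)\dto\Exp(1)$ with moments;
\item part (iii) directly at $m=n$, where the condition is exactly $i=\go(n^{(d-1)/d})$;
\item part (iv), since $r!\gb^{-dr}$ are the rising factorial moments of $\Geo(\gb^d)$.
\end{itemize}
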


\begin{remark}
 We note that the critical window $\gT\big(n^{(d-1)/d}\big)$ from Theorem~\ref{thm:limit}(\ref{it:prop2}) is consistent with the results in \cite{Jan2023}, see especially Section~9.1 therein. The limiting random variable $\Xi_\ga$ has the same distribution as $1-Y$ conditioned on $1-Y>0$, where $Y$ is Fr\'echet distributed with shape $d-1$ and scale $s=\big(\ga^d/(d-1)\big)^{1/(d-1)}$, that is, $Y$ has cumulative distribution function 
 \begin{equation}
  \IP(Y<x) = e^{-(x/s)^{-(d-1)}} \qquad \text{ for } \qquad x>0.
 \end{equation}
\end{remark}

To formulate our second main result, we adapt the notion of a threshold from combinatorial probability and random graph theory (see e.g. \cite[Chapter~1.5]{JLR00}). Specifically, we say that a function $g(n)$ is an \emph{threshold} for a set $S$ of vertices in $G_n$ if
\begin{equation}\label{eq:defthreshold}
 \IP(S\cap [m]\neq \emptyset) \to \begin{cases}
                                   0 & \text{ if } m=m(n)=o(g(n))\\
                                   1 & \text{ if } m=m(n)=\go(g(n)).
                                  \end{cases}
\end{equation}
In the context of the \ER random graph $G(n,m)$, this corresponds to a coarse threshold. We note that $g(n)$ is not uniquely determined, since any $\tilde g(n)$ with $\tilde g(n)=\gT(g(n))$ also satisfies \eqref{eq:defthreshold}. We note further that the threshold gives the position of the earliest vertex belonging to $S$.

This notion enables a tidy formulation for our answer to problem~(\ref{item:q2}):
\begin{theorem}\label{thm:threshold}
 The threshold for $[n]\setminus D_n$ in $G_n$ is $n^{\frac{d-1}{d+1}}$.
\end{theorem}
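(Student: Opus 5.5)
The plan is to reduce everything to first and second moments of the counting variable
\[
Z_m:=|[m]\setminus D_n| = \sum_{i=1}^m \one\{i\notin D_n\},
\]
using the key observation \eqref{eq:keyobs}. Vertex $n$ sends $d$ independent uniform arrows into $[n-1]$. By \eqref{eq:keyobs}, $i\notin D_n$ exactly when all $d$ arrows miss $A_i(n-1)$. Writing $b_i(n-1):=n-1-a_i(n-1)$ for the number of non-ancestors of $i$, this gives the exact identity
\[
\IP(i\notin D_n)=\IE\Big[\Big(\tfrac{b_i(n-1)}{n-1}\Big)^d\Big].
\]
The whole question therefore becomes the upper tail of the deficit $b_i(n)$ when $i$ is small. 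Note that $b_i(i)=i-1$ deterministically.

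\textbf{Tail of the deficit.} The heuristic comes from Theorem~\ref{thm:limit}(\ref{it:prop2}). Expanding \eqref{eq:thmlimitXi} for small $\ga^d=i^d/n^{d-1}$ suggests
\[
\IP\big(b_i(n)\ge t\big)\approx \frac{i^d\, t^{1-d}}{d-1}\qquad\text{for } i\ll t\le n,\ i=o(n^{(d-1)/d}).
\]
Integrating this tail against $d\,t^{d-1}n^{-d}$ gives $\IE[(b_i(n)/n)^d]=\gT(i^d n^{1-d})$. Hence $\IE Z_m=\gT(m^{d+1}n^{1-d})$, which is $o(1)$ exactly when $m=o(n^{(d-1)/(d+1)})$. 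This settles the $0$-statement by Markov's inequality, once the tail bound is made rigorous as a uniform upper bound. For that I would use the Markov-chain structure of the ancestry process from Section~\ref{sec:ancestry}. At step $j+1$ the deficit increases by one with probability $(b/j)^d$ and otherwise stays put; this is the multi-draw Pólya urn description. Since $x\mapsto x^d$ is convex, $\IE[b^d]$ satisfies a recursion that can be bounded directly. Concretely, $\IE[b(j+1)^d\mid b(j)=b]=b^d+((b+1)^d-b^d)(b/j)^d$, and an induction in $j$ should give $\IE[b_i(j)^d]\le C i^d j$, which is precisely the bound needed.

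\textbf{The $1$-statement.} For $m=\go(n^{(d-1)/(d+1)})$ I would not apply the second moment method to $Z_m$ directly, since the events $\{i\notin D_n\}$ all share the same $d$ arrows of vertex $n$. Instead I would work in two stages. First, fix a small $\gep>0$ and count the indices with a macroscopic deficit, $N:=|\{i\le m: b_i(n-1)\ge \gep n\}|$; this count depends only on $G_{n-1}$. The tail estimate gives $\IE N=\gT_\gep(m^{d+1}n^{1-d})\to\infty$, and I would show $N\to\infty$ in probability by a second moment computation. Second, given $N\ge 1$, vertex $n$ misses $A_i(n-1)$ with probability at least $\gep^d$. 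To push this to probability $1-o(1)$, I would exploit the fact that the non-ancestor sets of different $i$ with large deficits are substantially different. Their deficits are created by distinct rare early events, namely early vertices whose $d$ arrows all avoid the current ancestor set. So the events that $n$ hits $A_i(n-1)$ for many such $i$ simultaneously should have probability tending to $0$. Alternatively, and more simply, I could use the monotonicity of the threshold definition: take $\gep\to0$ slowly and use that $\IP(\text{all } d \text{ arrows land in } \bigcap_i A_i(n-1))$ is small once the union of the non-ancestor sets covers a non-negligible fraction of $[n-1]$.

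\textbf{Main obstacle.} I expect the main difficulty to be the decorrelation in the $1$-statement. This means both the second moment $\IE[N^2]\sim(\IE N)^2$, which needs a joint tail bound for $(b_i,b_{i'})$ showing asymptotic independence, and the final step showing that many large deficit sets jointly cover a positive fraction of $[n-1]$. I would attack the joint tail through the joint ancestry chain of two vertices $i<i'$. Every ancestor of $i'$ is an ancestor of $i$ unless its path avoids $i$. One has to show that the rare early event producing a deficit of order $n$ for $i$ typically lies at a time scale that makes it irrelevant to $i'$, and conversely.
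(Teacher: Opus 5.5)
The $0$-statement is fine and is the paper's first-moment argument. The $1$-statement has a genuine gap, described below.

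\textbf{The $0$-statement.} The induction you sketch for $\IE[b_i(j)^d]\le Ci^dj$ does not close. The conditional increment $((b+1)^d-b^d)(b/j)^d$ is of order $b^{2d-1}/j^d$, a higher moment, and bounding $b\le j$ only gives the trivial $O(j^d)$. You do not need it. By Lemma~\ref{lemma:aindistr} and Remark~\ref{rem:stopping}, $\prod_{k<j}[1-((i-1)/(n-1-k))^d]=\IP(a_i(n-1)\ge j)$. This gives the exact identity $\IE[b_i(n-1)^d]=(i-1)^d\,\IE[a_i(n-1)]\le (i-1)^d(n-i)$, which is exactly the bound the paper sums over $i\le m$.

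\textbf{The gap in the $1$-statement.}
\begin{itemize}
\item Stage~2 as set up only yields $\IP(\exists i\le m:\ i\notin D_n)\ge \gep^d\,\IP(N\ge1)$.
\item The decorrelation you need to push this to $1-o(1)$ is not proved, and it is essentially the original problem one level down. Let $x_1,\dots,x_d$ be the targets of vertex $n$. The event that $n$ hits $A_i(n-1)$ for every $i\in\mathcal I$ is precisely $\mathcal I\subseteq D_{x_1}\cup\dots\cup D_{x_d}$.
\item Your ``simpler'' alternative is wrong. The complement of $\{\exists i:\ \text{all $d$ arrows of $n$ miss } A_i(n-1)\}$ is $\{\forall i:\ \text{some arrow hits } A_i(n-1)\}$. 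For $d\ge 2$ this is strictly larger than $\{\text{all arrows land in } \bigcap_i A_i(n-1)\}$.
\item Even for that smaller event, non-ancestor sets covering a non-negligible fraction of $[n-1]$ only keep its probability away from $1$; they do not make it $o(1)$.
\item Stage~1 also relies on a second-moment bound for pairs of ancestry processes, which you leave open. Remark~\ref{rem:highermoments} notes that no closed formula is available for the joint process.
\item Raising the deficit threshold to $(1-\gep)n$ would make one vertex enough, giving probability $\ge(1-\gep)^d$. But you would still need $N\ge1$ whp, so the same unproved joint estimate remains.
\end{itemize}

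\textbf{The missing idea.} One suitable vertex suffices, and you can certify its existence through sources, whose joint probabilities are exact (Lemma~\ref{lemma:jointsources}), rather than through deficits. The paper argues as follows.
\begin{itemize}
\item Let $k=\floor{M_1n^{(d-1)/d}}$. By Lemmas~\ref{lemma:notDn} and \ref{lemma:limnotDn}, $\lim_n\IP(k\notin D_n)\ge 1-\gep$ for $M_1$ large.
\item Apply Theorem~\ref{thm:sources} to $G_k$, noting $k^{d/(d+1)}=\gT(n^{(d-1)/(d+1)})$. With probability at least $1-\gep-o(1)$, some $i_0\le M_2n^{(d-1)/(d+1)}$ is a source of $G_k$.
\item Swap the targets $i_0$ and $k$ in every arrow drawn after time $k$. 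This is measure-preserving, and it turns $\{i_0\notin D_n\}$ into $\{k\notin D'_n\}$.
\item The event $\{k\notin D'_n\}$ depends only on arrows from vertices $>k$, so it is independent of $G_k$.
\end{itemize}
This gives probability at least $(1-\gep)^2-o(1)$, with no decorrelation or second moment needed.
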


\begin{remark}
 The proof of Theorem~\ref{thm:threshold} shows specifically that for every $\gep>0$, there exist constants $0<c,C<\infty$ such that
 \begin{equation}\label{eq:proofstratthreshold}
  \IP\left( \left[\floor{cn^{(d-1)/(d+1)}}\right] \setminus D_n \neq \emptyset \right) < \gep
  \qquad \text{ and } \qquad
  \IP\left( \left[\floor{Cn^{(d-1)/(d+1)}}\right] \setminus D_n \neq \emptyset \right) > 1-\gep.
 \end{equation}
 For the first statement (the lower bound on the threshold), we employ the first moment method in combination with elementary properties of the ancestor process. For the upper bound on the threshold, we use a coupling trick to show that with probability close to 1, there is a vertex $\leq Cn^{(d-1)/(d+1)}$ that gains no extra ancestors until the range $\gT\big(n^{(d-1)/d}\big)$. This means that the further evolution of such a vertex's ancestry behaves just like a vertex from this range, which has a non-trivial limiting probability to be in $D_n$.
\end{remark}

\begin{remark}
 Due to the inequalities in \eqref{eq:proofstratthreshold}, it may still be the case that the earliest non-descendant is concentrated in a narrower regime than $\gT\big( n^{(d-1)/(d+1)} \big)$. We suspect however that this is not the case.
\end{remark}

Among the ingredients for the proof of Theorem~\ref{thm:threshold} is the following result, which may be of independent interest given that the number of source nodes has been investigated in \cite{KS2017,MPSU2014,TM2001}:

\begin{theorem}\label{thm:sources}
 For $M>0$, denote by $Z_M$ the number of sources (that is, vertices with in-degree 0 in $G_n$) among the first $\floor{Mn^{d/(d+1)}}$ vertices. As $n\to\infty$, we have
 \begin{equation}\label{eq:sourcePoi}
  Z_M\dto \Poi\left(\frac{M^{d+1}}{d+1}\right),
 \end{equation}
 with convergence of all moments. In particular, the threshold for source nodes in $G_n$ is $n^{d/(d+1)}$.
\end{theorem}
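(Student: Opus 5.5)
We prove Theorem~\ref{thm:sources} by the method of factorial moments. Write $m=\floor{Mn^{d/(d+1)}}$, and for $i\in[m]$ let $I_i$ be the indicator that $i$ has in-degree $0$ in $G_n$. The arrows out of distinct vertices $j$ are independent, and each of the $d$ arrows from $j$ is uniform on $[j-1]$. Hence for any fixed set $S=\{i_1<\dots<i_k\}\subseteq[m]$, the event that all of $S$ are sources is the event that every arrow from every $j$ avoids $S\cap[j-1]$. The arrows out of $i_\ell$ only matter through their targets, which lie below $i_\ell$; those targets must avoid $\{i_1,\dots,i_{\ell-1}\}$. This gives the exact product formula
\begin{equation}
 \IP(I_{i_1}=\dots=I_{i_k}=1)=\prod_{j=2}^{n}\Bigl(1-\frac{|S\cap[j-1]|}{j-1}\Bigr)^{d}.
\end{equation}
The single-vertex case $k=1$ yields $\IP(I_i=1)=\prod_{j=i+1}^n(1-\frac{1}{j-1})^d=((i-1)/(n-1))^d$ for $i\geq 2$. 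Summing over $i\le m$ gives $\IE Z_M\sim m^{d+1}/((d+1)n^d)\to M^{d+1}/(d+1)$, which already identifies the Poisson parameter.

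For the $k$-th factorial moment we sum the product formula over all $k$-sets $S\subseteq[m]$ and compare it with $\prod_\ell\IP(I_{i_\ell}=1)$. For $j>i_k$, the factor $(1-k/(j-1))^d$ equals $\prod_\ell(1-1/(j-1))^d$ up to a multiplicative error $1+O(k^2/j^2)$. The product of these errors over $j>i_k$ is $1+O(k^2/i_k)$. The factors with $i_\ell<j\le i_k$ are bounded by the corresponding independent ones up to similar errors, and in any case each is at most $1$.

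So, restricting to sets with $\min S\ge \delta m$, the ratio to the independent product is $1+o(1)$ uniformly. The sets with some element below $\delta m$ contribute at most $O(\delta^{d+1})$ relative to the main term. This follows because the joint probability is bounded by $\prod_\ell (i_\ell/n)^{d}(1+o(1))$, using the monotone bound $1-s/x\le(1-1/x)^s$, which gives negative correlation. We therefore get
\begin{equation}
 \IE (Z_M)_k = k!\sum_{S}\IP(\cdot)\to\Bigl(\frac{M^{d+1}}{d+1}\Bigr)^k.
\end{equation}
By the method of moments this implies \eqref{eq:sourcePoi} together with convergence of all moments, since factorial-moment convergence to Poisson moments implies convergence of ordinary moments.

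The threshold statement then follows. If $m=o(n^{d/(d+1)})$, then $\IE Z\to0$ by the first-moment computation above. If $m=\go(n^{d/(d+1)})$, then monotonicity in $m$ gives $\IP(Z=0)\le \IP(Z_M=0)\to e^{-M^{d+1}/(d+1)}$ for every $M$, and letting $M\to\infty$ shows this probability tends to $0$.

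The main obstacle is the uniform control of the correlation errors in the product formula: the range $i_\ell<j\le i_k$ and small $i_1$. The cleanest route is the negative-correlation upper bound combined with the lower bound $1-s/x\ge(1-1/x)^s(1-O(s^2/x^2))$, with $x\ge \delta m\to\infty$.
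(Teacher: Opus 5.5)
Your proof is correct. Its skeleton matches the paper's: factorial moments built on an exact product formula for joint source probabilities. Where you differ is in how you control the dependence between the indicators.

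The paper's Lemma~\ref{lemma:jointsources} gives the closed form $\prod_{\ell=1}^{k}\bigl((i_\ell-\ell)/(n-\ell)\bigr)^d$ for $i_1<\dots<i_k$. The joint probability is therefore already a product of one-index factors, the factorial moment essentially factorises, and the only work left is showing that tuples with a repeated index are negligible.

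You instead keep the unsimplified product over arrival times $j$ and compare it with the product of marginals $p_i=\IP(I_i=1)$. Bernoulli's inequality gives exact negative correlation. A second-order expansion gives near-independence once the relevant indices exceed $\delta m$, and the cutoff costs $O(\delta^{d+1})$.

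Your product in fact telescopes on each block $i_\ell<j\le i_{\ell+1}$, where $|S\cap[j-1]|=\ell$, to exactly the paper's closed form. With that identity the Taylor bookkeeping and the cutoff become unnecessary. The bounds $(i_\ell-k)_+/n\le (i_\ell-\ell)/(n-\ell)\le (i_\ell-1)/(n-1)$ sandwich the joint probability between two product weights. Their sums over $k$-sets are both asymptotic to $\frac{1}{k!}\bigl(M^{d+1}/(d+1)\bigr)^k$.

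What your route buys is robustness. It never uses the telescoping identity, so it would survive changes to the attachment rule where no closed form exists, and it makes the negative correlation explicit.

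Two small points should be stated:
\begin{itemize}
\item Passing from $k!\sum_S\prod_\ell p_{i_\ell}$ to $\bigl(\sum_{i\le m}p_i\bigr)^k$ requires discarding tuples with a repeated index (the analogue of the paper's $\gS_{\mathtt{nad}}$). This is immediate because $\max_{i\le m}p_i\le(m/n)^d\to0$.
\item The limits must be taken in the order $n\to\infty$ first, then $\delta\to0$.
\end{itemize}
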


\begin{remark}\label{rem:PPP}
 A stronger statement is in fact true: The point processes $Z_n$ defined on $[0,\infty)$ via 
 \begin{equation}\label{eq:sourcepp}
  Z_n([a,b]) := \left|\left\{i: \floor{an^{d/(d+1)}} \leq i \leq \floor{bn^{d/(d+1)}} \text{ and $i$ is a source in $G_n$} \right\}\right|
 \end{equation}
 converges in distribution to an inhomogeneous Poisson point process on $[0,\infty)$ with intensity measure $x^d \Di x$.
\end{remark}

\subsection{Notation}

We will use $C$ for generic positive constants used in estimates, and their exact values might change from one occurrence to the next. Further, for some estimates we tacitly assume that $n$ is chosen sufficiently large. In the proofs below, all limits are taken as $n\to\infty$, unless otherwise indicated.

We say that an event holds \emph{with high probability} or \emph{whp} if it holds with probability $1-o(1)$. We write $\Geo(p)$ for the geometric distribution supported on the positive integers with parameter $p$, $\Poi(\gl)$ for the Poisson distribution with parameter $\gl$, and $\Exp(1)$ for the exponential distribution with parameter 1.

We denote the Beta and Gamma functions by $\Beta(a,b)$ and $\gC(z)$, respectively.

Throughout the paper, we use standard Landau notation: For two functions $f(n), g(n)>0$, we write $f=o(g)$, $f=O(g)$, $f=\gT(g)$, and $f=\go(g)$ as $n\to\infty$ if $\lim f/g=0$, $\limsup f/g<\infty$, $0<\liminf f/g\leq \limsup f/g<\infty$, and $\lim f/g=\infty$, respectively. We write $f\sim g$ to express $\lim f/g=1$.

\section{The Ancestry Process}\label{sec:ancestry}

Let $a_i(n):=|A_i(n)|$ for $i\in[n]$ denote the \emph{ancestry process} of the vertex $i$ in $G_n$.
Clearly,
\begin{equation}\label{eq:aintrivial}
a_i(i)=1\qquad \text{\ and\ }\qquad a_1(n)=n.
\end{equation}
Moreover, we have
\begin{equation}\label{eq:ainincrement}
 0\leq a_i(n)-a_i(n-1) \leq 1
\end{equation}
and note that $\{a_i(n)\}_{n\geq i}$ is a Markov chain on the positive integers, with transition probabilities given by
\begin{equation}\label{eq:aintransitionp}
 \IP(a_i(n)=j\mid a_i(n-1)=j) = \bigg( \frac{n-1-j}{n-1}\bigg)^d
\end{equation}
and, in view of \eqref{eq:ainincrement},
\begin{equation}\label{eq:aintransitionp2}
 \IP(a_i(n)=j+1\mid a_i(n-1)=j) = 1-\bigg( \frac{n-1-j}{n-1}\bigg)^d.
\end{equation}
Indeed, the event $a_i(n)=a_i(n-1)$ is exactly the event where all $d$ outgoing arrows from vertex $n$ point to $[n-1]\setminus A_i(n-1)$, which yields \eqref{eq:aintransitionp}.

\begin{remark}\label{rem:urn}
 The Markov chain $\{a_i(n)\}_{n\geq i}$ can also be thought of as an urn process. The urn is initiated at time $i$ to contain $i-1$ balls of colour $\emptyset$ and one ball of colour $\{i\}$ (the seemingly weird choice of ``colours'' will turn out to be useful). At time $n\geq i$, $d$ balls are drawn with replacement from the urn (independently of everything else), resulting in balls with colours $c_1,\dots, c_d$. Afterwards, an additional ball with colour $\bigcup_{s=1}^d c_s$ is added to the urn. We remark that similar urn processes have been used in \cite{KS2017} and \cite{MPSU2014} for the investigation of $G_n$. General multi-draw urn models were studied in a number of papers, for example \cite{KM2017} or \cite{LMS2018}. More recently, a deeper connection between the random recursive $d$-DAG and multi-draw P\'olya urns was formulated in \cite{MS2025}. However, in this paper the relevant phenomena occur when $i$ (and thus the initial configuration of the urn) depends on $n$, which is not covered by usual urn models.
\end{remark}

We have the following lemma for the distribution of $a_i(n)$:

\begin{lemma}\label{lemma:aindistr}
 For $2\leq i\leq n$ and $j\in[n+1-i]$, we have
 \begin{equation}\label{eq:aindistr}
  \IP(a_i(n)=j) = \bigg(\frac{i-1}{n-j}\bigg)^d \cdot \prod_{k=1}^{j-1} \left[1-\bigg(\frac{i-1}{n-k}\bigg)^d\,\right].
 \end{equation}
\end{lemma}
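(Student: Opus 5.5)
The plan is to prove \eqref{eq:aindistr} by induction on $n\geq i$, using only the Markov transition probabilities \eqref{eq:aintransitionp} and \eqref{eq:aintransitionp2}. To keep the notation light, write $p:=i-1\geq 1$ and let $\pi_n(j)$ denote the right-hand side of \eqref{eq:aindistr}.

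\emph{Extending the formula to out-of-range $j$.} First I will check that $\pi_n(j)$ vanishes automatically at the boundary value $j=n+2-i$. Formally it is $\pi_n(n+2-i)$, and its product over $k$ contains the factor with $k=n+1-i$, namely $1-\big(p/(n-(n+1-i))\big)^d=1-1=0$. Since $a_i(n)\leq n+1-i$ deterministically, the identity $\IP(a_i(n)=j)=\pi_n(j)$ then also holds at $j=n+2-i$. This removes the boundary case from the recursion. For the base case $n=i$, only $j=1$ is allowed, and $\pi_i(1)=(p/(i-1))^d=1=\IP(a_i(i)=1)$ by \eqref{eq:aintrivial}.

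\emph{Induction step.} Assume the formula at time $n-1$ for all $j\in[n+1-i]$, which includes the extended value just discussed. Conditioning on $a_i(n-1)$ and using \eqref{eq:ainincrement}, \eqref{eq:aintransitionp} and \eqref{eq:aintransitionp2}, I get
\begin{equation}
 \IP(a_i(n)=j)=\pi_{n-1}(j)\Big(\tfrac{n-1-j}{n-1}\Big)^d+\pi_{n-1}(j-1)\Big[1-\Big(\tfrac{n-j}{n-1}\Big)^d\Big],
\end{equation}
where the second term is absent if $j=1$.

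The key algebraic observation concerns the first term. Multiplying $(p/(n-1-j))^d$ by $((n-1-j)/(n-1))^d$ gives $(p/(n-1))^d$. Reindexing $k\mapsto k+1$ turns $\prod_{k=1}^{j-1}[1-(p/(n-1-k))^d]$ into $\prod_{k=2}^{j}[1-(p/(n-k))^d]$.

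For the second term, the same reindexing yields
\begin{equation}
 (p/(n-j))^d\prod_{k=2}^{j-1}\big[1-(p/(n-k))^d\big]\,\big[1-((n-j)/(n-1))^d\big].
\end{equation}

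Now factor out the common product $\prod_{k=2}^{j-1}$. What remains is
\begin{equation}
 (p/(n-1))^d\big(1-(p/(n-j))^d\big)+(p/(n-j))^d-(p/(n-1))^d=(p/(n-j))^d\big(1-(p/(n-1))^d\big).
\end{equation}
The factor $1-(p/(n-1))^d$ is exactly the missing $k=1$ term, so the whole expression equals $\pi_n(j)$. The case $j=1$ is the same computation without the second term: it gives $(p/(n-1))^d\cdot[1-(p/(n-1))^d]\cdot\ldots$, which reduces correctly because the product $\prod_{k=2}^{1}$ is empty and $\pi_{n-1}(0)$ is interpreted as $0$. I will double-check this case separately, since $\pi_n(1)=(p/(n-1))^d$ should come directly from $\pi_{n-1}(1)\,((n-2)/(n-1))^d=(p/(n-2))^d((n-2)/(n-1))^d$.

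\emph{Main obstacle.} The only real difficulty is bookkeeping at the edges of the range of $j$: the case $j=1$ and the maximal value $j=n+1-i$, where the time-$(n-1)$ chain cannot yet sit at $j$. The vanishing-factor observation above is designed precisely to absorb this second case into the general computation.
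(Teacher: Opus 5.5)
Your proof is the paper's argument: induction on $n$, conditioning on $a_i(n-1)$ to get a ``stay'' term and a ``step'' term, and the identity
\[
 \Big(\frac{p}{n-1}\Big)^d\Big[1-\Big(\frac{p}{n-j}\Big)^d\Big]+\Big(\frac{p}{n-j}\Big)^d-\Big(\frac{p}{n-1}\Big)^d=\Big(\frac{p}{n-j}\Big)^d\Big[1-\Big(\frac{p}{n-1}\Big)^d\Big].
\]
Its last factor supplies the missing $k=1$ term of the product. The one difference is that you explicitly treat the top value $j=n+1-i$, which the paper passes over silently.

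Your device for that top value needs a small repair. The extended value $\pi_{n-1}(n+1-i)$ has prefactor $(p/(i-2))^d$, which is undefined when $i=2$: you get $\infty\cdot 0$. For the same reason, the cancellation $(p/(n-1-j))^d((n-1-j)/(n-1))^d=(p/(n-1))^d$ is $\infty\cdot 0$ there. Instead, argue directly at $j=n+1-i$:
\begin{itemize}
 \item the stay term is $0$, because $a_i(n-1)\le n-i$;
 \item the rewritten form $(p/(n-1))^d\prod_{k=2}^{j}[1-(p/(n-k))^d]$ is also $0$, because its $k=j$ factor is $1-(p/(i-1))^d=0$.
\end{itemize}
With this, the rest of your computation goes through unchanged for all $i\geq 2$.

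Also, for $j=1$ the stay term is just $(p/(n-1))^d$, with no factor $1-(p/(n-1))^d$. The direct check you give at the end of that paragraph is the correct one.
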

\begin{proof}
 Fixing $i\geq 2$, we proceed by induction on $n$. The base case is covered by equation~\eqref{eq:aintrivial}. In the induction step, we observe using \eqref{eq:ainincrement} and \eqref{eq:aintransitionp} for $j=1$ that
 \[
  \IP(a_i(n)=1) =  \bigg( \frac{n-2}{n-1}\bigg)^d \bigg(\frac{i-1}{n-2}\bigg)^d = \bigg( \frac{i-1}{n-1}\bigg)^d.
 \]
 Similarly, using \eqref{eq:ainincrement}--\eqref{eq:aintransitionp2} for $1\leq j\leq n-1$, we obtain:
 \begin{align*}
  \IP(a_i(n)=j+1) &= \bigg( \frac{n-2-j}{n-1}\bigg)^d \cdot \bigg(\frac{i-1}{n-2-j}\bigg)^d \cdot \prod_{k=1}^{j} \left[1-\bigg(\frac{i-1}{n-1-k}\bigg)^d\,\right]\\
  &\qquad + \left[1-\bigg( \frac{n-1-j}{n-1}\bigg)^d\,\right] \cdot \bigg(\frac{i-1}{n-1-j}\bigg)^d \cdot \prod_{k=1}^{j-1} \left[1-\bigg(\frac{i-1}{n-1-k}\bigg)^d\,\right]\\
  &= \bigg(\frac{i-1}{n-1-j}\bigg)^d \left[1-\bigg(\frac{i-1}{n-1}\bigg)^d\,\right] \prod_{k=1}^{j-1}\left[1-\bigg(\frac{i-1}{n-1-k}\bigg)^d\,\right],
 \end{align*}
 which simplifies to the right-hand side of \eqref{eq:aindistr} for $j+1$.
\end{proof}

\begin{remark}\label{rem:stopping}
 We can reinterpret equation~\eqref{eq:aindistr} in terms of a stopping time: Consider a sequence $\big(B_{n,i}(j)\big)_{j\geq 1}$ of independent Bernoulli trials, such that
 \begin{equation}
  B_{n,i}(j)\sim \Ber(p_{n,i}(j)) \qquad \text{ with } \qquad p_{n,i}(j) = \left(\frac{i-1}{n-j}\right)^d.
 \end{equation}
 Then the expression in equation~\eqref{eq:aindistr} equals $\IP(T=j)$ for $T=\min\{k\geq 1:B_{n,i}(k)=1\}$, and in particular $T=a_i(n)$ in distribution.
\end{remark}

\begin{lemma}\label{lemma:firstmoment}
 For all $n\geq 2$, we have the following:
 \begin{enumerate}[i.]
  \item\label{item:firstmomenti} The expected number of descendants of $n$ in $G_n$ is given by
  \begin{equation}\label{eq:EDn}
   \IE|D_n| = 2 + \sum_{i=2}^{n-1} \sum_{j=1}^{n-i} \left[1-\bigg(\frac{n-1-j}{n-1}\bigg)^d\,\right]
   \bigg(\frac{i-1}{n-1-j}\bigg)^d \cdot \prod_{k=1}^{j-1} \left[1-\bigg(\frac{i-1}{n-1-k}\bigg)^d\,\right],
  \end{equation}
  or, equivalently,
  \begin{equation}\label{eq:EDn1}
   \IE|D_{n+1}| = 2 + \sum_{i=1}^n \sum_{j=1}^{n-i} \left[1-\bigg(\frac{n-j}{n}\bigg)^d\right] \bigg(\frac{i}{n-j}\bigg)^d \cdot \prod_{k=1}^{j-1} \left[1-\bigg(\frac{i}{n-k}\bigg)^d\right].
  \end{equation}
  \item\label{item:firstmomentii} The expected number of joint descendants of $n+1$ and $n+2$ in $G_{n+2}$ is given by
  \begin{multline}\label{eq:EDncap}
   \IE|D_{n+1}\cap D_{n+2}| = 2 - \bigg(\frac{n}{n+1}\bigg)^d\\
   + \sum_{i=1}^n \sum_{j=1}^{n-i} \left[1-\bigg(\frac{n-j}{n+1}\bigg)^d\right]\cdot \left[1-\bigg(\frac{n-j}{n}\bigg)^d\right] \bigg(\frac{i}{n-j}\bigg)^d \cdot \prod_{k=1}^{j-1} \left[1-\bigg(\frac{i}{n-k}\bigg)^d\right]
  \end{multline}
 \end{enumerate}
\end{lemma}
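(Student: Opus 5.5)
We write $|D_n|=\sum_{i\in[n]}\one\{i\in D_n\}$ and use the key observation \eqref{eq:keyobs} together with the distribution of $a_i(n-1)$ from Lemma~\ref{lemma:aindistr}. Two vertices always lie in $D_n$ (for $n\geq 2$): vertex $n$ itself, and vertex $1$, the unique sink, which is reachable from every vertex. This gives the constant $2$; the remaining indices are $2\leq i\leq n-1$.

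\emph{Part i.} For $2\leq i\leq n-1$, we condition on $a_i(n-1)=j$ with $j\in[n-i]$. By \eqref{eq:aintransitionp2}, the probability that $a_i(n)=j+1$, i.e.\ that $i\in D_n$, is
\[
1-\bigg(\frac{n-1-j}{n-1}\bigg)^d .
\]
This uses that the $d$ arrows out of $n$ are independent of $G_{n-1}$. Lemma~\ref{lemma:aindistr}, applied with $n-1$ in place of $n$, gives
\[
\IP(a_i(n-1)=j)=\bigg(\frac{i-1}{n-1-j}\bigg)^d\prod_{k=1}^{j-1}\left[1-\bigg(\frac{i-1}{n-1-k}\bigg)^d\right].
\]
Multiplying, summing over $j$, and then summing over $i$ yields \eqref{eq:EDn}. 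To get \eqref{eq:EDn1}, we replace $n$ by $n+1$ and shift the index $i\mapsto i+1$. The sum then runs over $1\leq i\leq n-1$. The term $i=n$ in \eqref{eq:EDn1} is an empty inner sum, so it can be added harmlessly.

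\emph{Part ii.} Vertex $1$ lies in both descendant sets. Vertex $n+1$ lies in $D_{n+1}$ and lies in $D_{n+2}$ exactly when $n+2\to n+1$. The probability of this is $1-(n/(n+1))^d$. These two vertices account for $2-(n/(n+1))^d$. Vertex $n+2\notin D_{n+1}$, so it contributes nothing.

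For $2\leq i\leq n$, we use \eqref{eq:keyobs} twice:
\[
i\in D_{n+1}\cap D_{n+2} \iff a_i(n+1)=a_i(n)+1 \text{ and } a_i(n+2)=a_i(n+1)+1 .
\]
Conditioning on $a_i(n)=j$, the Markov property and \eqref{eq:aintransitionp2} give the probability
\[
\left[1-\bigg(\frac{n-j}{n}\bigg)^d\right]\cdot\left[1-\bigg(\frac{n+1-(j+1)}{n+1}\bigg)^d\right],
\]
and the second factor simplifies to $1-((n-j)/(n+1))^d$. We then apply Lemma~\ref{lemma:aindistr} with $n$ fixed and $i$ shifted by one, so that $i-1$ becomes $i$ and $i$ runs from $1$ to $n-1$. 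Summing gives \eqref{eq:EDncap}, where the $i=n$ term is again empty.

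The only delicate points are bookkeeping. First, the boundary vertices ($1$, $n$, $n+1$) must be handled correctly. Second, the index ranges must match, since $j$ ranges over $[n-i]$ when the ancestry is taken in $G_{n-1}$, respectively $G_n$. Third, we must justify using the Markov chain transitions, including the joint two-step transition in part ii. That step is immediate because the arrows out of each new vertex are chosen independently of the past.
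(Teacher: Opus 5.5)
Your proposal is correct and follows essentially the same route as the paper. Both proofs decompose $|D_n|$ into indicators via \eqref{eq:keyobs}, condition on $a_i(n-1)$ (resp.\ $a_i(n)$) using the transition probabilities \eqref{eq:aintransitionp2} and Lemma~\ref{lemma:aindistr}, invoke the Markov property for the two-step transition, and finish with the index shift and the empty $i=n$ term. Your treatment of the boundary vertices $1$, $n$ and $n+1$ is somewhat more explicit than the paper's.
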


\begin{proof}
 We write $X_i=\one\{i\in D_n\}$, and rely on observation~\eqref{eq:keyobs}. Thus
 \begin{align*}
  \IE|D_n|
  &= \sum_{i=1}^n \IP(X_i=1)
   = 2 + \sum_{i=2}^{n-1} \IP\big(a_i(n)=1+a_i(n-1)\big)\\
  &= 2 + \sum_{i=2}^{n-1} \sum_{j=1}^{n-i} \IP\big(a_i(n)=j+1\mid a_i(n-1)=j\big) \IP(a_i(n-1)=j)
 \end{align*}
 by the law of total probability. Equations~\eqref{eq:aintransitionp2} and \eqref{eq:aindistr} yield \eqref{eq:EDn}, and \eqref{eq:EDn1} follows after replacing $n$ by $n+1$, shifting $i$ to start the summation at 1, and then noting that $i=n$ leads to an empty inner sum.

 For the second part, write instead $X_i=\one\{i\in D_{n+1}\cap D_{n+2}\}$, where similarly $i\in D_{n+1}\cap D_{n+2}$ if and only if $a_i(n+2)=1+a_i(n+1)=2+a_i(n)$ (the exceptional cases being $X_1=1$ and $X_{n+1}=\one\{n+1\in D_{n+2}\}$, leading to the terms in before the double sum). We can hence, using the Markov property, write
 \begin{multline}
  \IP(X_i=1)
  = \sum_{j=1}^{n+1-i} \IP\big(a_i(n+2)=j+2\mid a_i(n+1)=j+1\big)\\
  \cdot \IP\big(a_i(n+1)=j+1 \mid a_i(n)=j\big) \IP\big(a_i(n)=j\big)
 \end{multline}
 and obtain \eqref{eq:EDncap} as before.
\end{proof}

\begin{remark}\label{rem:highermoments}
 A necessary ingredient to push our approach to Theorem~\ref{thm:jointdesc} to higher moments (and to convergence in distribution via the method of moments) is to determine $\IP(I\ssq D_n)$ for a fixed finite subset $I\ssq \IN$.

 We can extend the necessary constructions from Section~\ref{sec:ancestry}: Fix a finite set $I\ssq \IN$ and consider, for any subset $J\ssq I$, the set of joint ancestors
 \begin{equation}\label{eq:defAIJ}
  A_{I,J}(n) := \bigcap_{i\in J} A_i(n) \cap \bigcap_{i\in I\setminus J} [n]\setminus A_i(n).
 \end{equation}
 We define $a_{I,J}(n) := |A_{I,J}(n)|$ and set $a_I(n):=(a_{I,J}(n))_{J\ssq I}$. In other words, $a_I(n)$ is a $2^{|I|}$-dimensional vector indexed by subsets of $I$ whose $J$-coordinate records the number of vertices in $[n]$ that are ancestors to $J$ but not to $I\setminus J$ (the above $a_i(n)$ corresponds to $a_{\{i\},\{i\}}(n)$). Let $e_J$ denote the standard basis vector having 1 in the $J$-coordinate and 0 everywhere else.

 The process $\{a_I(n)\}_{n\geq 1}$ is again a Markov process taking values in $\IZ_{\geq 0}^{2^{|I|}}$, with $a_I(1)=e_{\{1\}}$ if $1\in I$ and $a_I(1)=e_{\emptyset}$ otherwise. In lieu of \eqref{eq:ainincrement}, we now have  $a_I(n)-a_I(n-1)=e_J$ with $J=\{i\in I: n\descend i\}$. However, there appears to be little hope of obtaining a similarly nice formula as \eqref{eq:aindistr} for the distribution of $a_I(n)$.

 Once again, this process can be thought of as a urn process, where the colours are given as subsets $J\ssq I$. At time $n$, we draw $d$ balls of colours $c_1,\dots,c_d$ with replacement from the urn, and add an additional ball of colour $\bigcup_{s=1}^d c_i$ if $n\notin I$, or instead of colour $\{n\} \cup \bigcup_{s=1}^d c_i$ if $n\in I$, to the urn. However, it is unclear if the urn interpretation is helpful for the higher moments, for the same reason as the one given in Remark~\ref{rem:urn}.
\end{remark}

\section{A Technical Theorem}\label{sec:technical}

We have seen in Lemma~\ref{lemma:firstmoment} that both the expressions for $\IE|D_{n+1}|$ and for $\IE|D_{n+1}\cap D_{n+2}|$ contain sums of the form
\begin{equation}
 \sum_{i=1}^n \sum_{j=1}^{n-i} q_n(i,j)\cdot \prod_{k=1}^{j-1} \left[1-\bigg(\frac{i}{n-k}\bigg)^d\,\right],
\end{equation}
where $q_n(i,j)$ is such that upon reparametrizing $\ga=in^{-(d-1)/d}$ and $\gb=j/n$, we obtain
\begin{equation}
 q_n(i,j) \sim \frac{1}{n} \hat q (\ga,\gb)
\end{equation}
for some function $\hat q:[0,\infty)\times [0,1]\to[0,\infty)$ that is no longer dependent on $n$. We will thus formulate and prove the following theorem to deal with such expressions:

\begin{theorem}\label{thm:technical}
 With notation as before, suppose there exists a constant $\gk>0$ such that
 \begin{equation}\label{eq:qnijest}
  0\leq q_n(i,j) \leq \gk \frac{j}{n^{d+1}} \frac{i^d}{(1-j/n)^d}
 \end{equation}
 for all $n\geq 1$, $i\in[n]$, and $j\in[n-i]$. Moreover assume that $n q_n(i,j)\to \hat q(\ga,\gb)$ for all $i$ and $j$ such that $in^{-(d-1)/d}\to \ga >0$ and $j/n\to \gb \in(0,1)$. Then
 \begin{multline}\label{eq:thmtech}
  n^{-\frac{d-1}{d}} \sum_{i=1}^n \sum_{j=1}^{n-i} q_n(i,j) \prod_{k=1}^{j-1} \left[1-\bigg(\frac{i}{n-k}\bigg)^d\,\right]\\
  \to \int_0^\infty \int_0^1 \hat q(\ga,\gb) e^{-\frac{\ga^d}{(d-1)}\big((1-\gb)^{1-d}-1\big)} \Di\gb \Di\ga
 \end{multline}
 as $n\to\infty$.
\end{theorem}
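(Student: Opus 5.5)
The plan is to read the double sum as a Riemann sum and apply dominated convergence. We rescale by $\ga=in^{-(d-1)/d}$ and $\gb=j/n$, so that each lattice cell has area $n^{-(d-1)/d}\cdot n^{-1}$. Define the step function
\[
F_n(\ga,\gb):=n\,q_n(i,j)\prod_{k=1}^{j-1}\left[1-\bigg(\frac{i}{n-k}\bigg)^d\,\right]
\]
for $(\ga,\gb)$ in the cell belonging to $(i,j)$, with $i=\lceil \ga n^{(d-1)/d}\rceil$ and $j=\lceil \gb n\rceil$, and set $F_n=0$ outside the admissible range $j\le n-i$. Then the left-hand side of \eqref{eq:thmtech} equals exactly $\int_0^\infty\int_0^1 F_n\Di\gb\Di\ga$. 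The proof consists of two steps: pointwise convergence of $F_n$ to the integrand on the right, and an integrable dominating function.

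\emph{Pointwise convergence.} Fix $\ga>0$ and $\gb\in(0,1)$. For the product we use $\log(1-x)=-x+O(x^2)$ with $x=(i/(n-k))^d\le i^d/(n(1-\gb))^d=O(n^{-1})$. This gives
\[
\log\prod_{k=1}^{j-1}\left[1-\bigg(\frac{i}{n-k}\bigg)^d\,\right]=-\sum_{k<j}\frac{i^d}{(n-k)^d}+O(j n^{-2}).
\]
The sum is a Riemann sum: $i^d n^{-d}\sum_{k<j}(1-k/n)^{-d}\approx \ga^d n^{-1}\cdot n\int_0^\gb(1-t)^{-d}\Di t$, since $i^d n^{-d}\sim \ga^d n^{-1}$. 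Evaluating the integral gives $\frac{\ga^d}{d-1}\big((1-\gb)^{1-d}-1\big)$. Combined with the hypothesis $nq_n\to\hat q$, this yields $F_n\to \hat q(\ga,\gb)e^{-\frac{\ga^d}{d-1}((1-\gb)^{1-d}-1)}$.

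\emph{Domination (the main obstacle).} Two features make this delicate: the factor $(1-\gb)^{-d}$ in \eqref{eq:qnijest} blows up near $\gb=1$, and $\ga$ ranges over an unbounded set. The exponential decay of the product has to compensate for both. Using $1-x\le e^{-x}$ and the comparison $\sum_{k<j}(n-k)^{-d}\ge \int_0^{j-1}(n-t)^{-d}\Di t$, we get
\[
\prod_{k<j}\left[1-\bigg(\frac{i}{n-k}\bigg)^d\,\right]\le \exp\Big(-c\,\ga^d\big((1-\gb')^{1-d}-1\big)\Big),
\]
with $\gb'=(j-1)/n$ and $i^d n^{-d}\ge \ga^d n^{-1}/C$ for $i\ge1$. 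The bound \eqref{eq:qnijest} gives $nq_n\le \gk\,\gb\,\ga^d(1-\gb)^{-d}$, up to constants from discretization. The candidate majorant is therefore
\[
G(\ga,\gb)=C\gb\ga^d(1-\gb)^{-d}\exp\big(-c\ga^d((1-\gb)^{1-d}-1)\big).
\]
To check integrability, integrate over $\ga$ first. Writing $u=(1-\gb)^{1-d}-1\asymp \gb$ for small $\gb$ and $\asymp(1-\gb)^{1-d}$ for $\gb$ near $1$, we have $\int_0^\infty\ga^d e^{-c\ga^d u}\Di\ga\asymp u^{-(d+1)/d}$. Near $\gb=0$ this gives $\gb\cdot\gb^{-(d+1)/d}=\gb^{-1/d}$, which is integrable. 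Near $\gb=1$ it gives $(1-\gb)^{-d}(1-\gb)^{(d-1)(d+1)/d}=(1-\gb)^{-1/d}$, which is also integrable. So $G\in L^1$.

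Two technical points remain. First, the shift from $\gb'$ to $\gb$ matters only for $j=1$, where the product is empty. In that region the bound \eqref{eq:qnijest} gives $nq_n\le\gk\, i^d/n^{d}$, and the contribution of these terms, $n^{-(d-1)/d}\sum_{i\le n} i^d n^{-d-1}$, is $O(n^{-(d-1)/d})\to0$, so it can be discarded. Second, cells near the upper boundary $j\approx n-i$ need $(1-j/n)\ge i/n$ to keep the constants uniform. Once these are handled, dominated convergence gives \eqref{eq:thmtech}.
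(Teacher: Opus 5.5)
\textbf{Gap in the domination step.} Your route is the paper's route: a step function on the rescaled grid, the pointwise limit via $\log(1-x)$ and a Riemann sum, the majorant $C\gb\ga^d(1-\gb)^{-d}e^{-c\ga^d((1-\gb)^{1-d}-1)}$, and integrability by integrating out $\ga$ first. Your pointwise step, your removal of $j=1$, and your integrability computation are fine. The domination step, however, fails as stated.

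With your convention $i=\lceil \ga n^{(d-1)/d}\rceil$, the whole strip $0<\ga\le n^{-(d-1)/d}$ belongs to $i=1$. There $i^d=1$, while $\ga^d n^{d-1}$ is arbitrarily small. So the bound $nq_n\le C\gb\ga^d(1-\gb)^{-d}$ ``up to constants from discretization'' is false on this strip.

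This is not harmless. Since $j$ runs up to $n-i=n-1$, the cell $j=n-1$ has $1-j/n=1/n$. For instance, the kernel of $\IE|D_{n+1}|$ from Lemma~\ref{lemma:firstmoment} gives $nq_n(1,n-1)=n(1-n^{-d})$, and the product is at least $\prod_{m\ge 2}(1-m^{-d})>0$. Hence $F_n\asymp n$ on $\{0<\ga\le n^{-(d-1)/d}\}\times\big(\tfrac{n-2}{n},\tfrac{n-1}{n}\big]$, whereas your $G(\ga,\gb)\to 0$ as $\ga\to 0$. So $F_n\le G$ fails, and dominated convergence cannot be invoked. This is exactly the ``$i(\ga)=1$ together with $\gb$ near $1$'' complication the paper isolates before its own domination argument.

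\textbf{Repair.} Use the paper's fix: discard the row $i=1$ before passing to the integral. By \eqref{eq:qnijest},
\begin{equation*}
\sum_{j=1}^{n-1} q_n(1,j)\prod_{k=1}^{j-1}\Big[1-\Big(\frac{1}{n-k}\Big)^d\Big]\le \gk\sum_{j=1}^{n-1}\frac{j}{n(n-j)^d}\le \gk\,\zeta(d),
\end{equation*}
so this row contributes only $O(n^{-(d-1)/d})$.

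On the remaining cells, $i\ge 2$ gives $\ga>(i-1)n^{-(d-1)/d}\ge \frac{i}{2}n^{-(d-1)/d}$, hence $i^d\le 2^d\ga^d n^{d-1}$. Combined with your bounds for $j\ge 2$, namely
\begin{equation*}
j/n<2\gb,\qquad 1-j/n\ge \tfrac12(1-\gb),\qquad 1-\gb'\le 2(1-\gb),\qquad \gb'\ge \gb/2,
\end{equation*}
the majorant $G$ then really does dominate $F_n$, and the argument goes through.

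Your choice of ceiling rather than the paper's floor is what makes $i^d n^{1-d}\ge \ga^d$ automatic in the exponential bound. The price is that it moves the difficulty to the upper bound on $i^d$ in the $q_n$ factor, which is exactly where the $i=1$ row causes trouble.
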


For the proof, we use a first order approximation to the summand on the left-hand side of \eqref{eq:thmtech}, together with an application of the dominated convergence theorem. It will be convenient to replace $i$ and $j$ by
\begin{equation}
 i(\ga)=i_n(\ga):= \floor{\ga n^{\frac{d-1}{d}}} \qquad \text{ and } \qquad j(\gb) = j_n(\gb) := \floor{\gb n}
\end{equation}
for $\ga>0, 0<\gb<1$. As it turns out, the case where $i(\ga)=1$ leads to complications with the $\gb=1$ case in later estimates, so we will first note that we can drop these terms because
\begin{equation}
 \sum_{j=1}^{n-1} q_n(1,j) \prod_{k=1}^{j-1} \left[1-\bigg(\frac{1}{n-k}\bigg)^d\,\right]
 \leq \gk \sum_{j=1}^{n-1} \frac{j}{n(n-j)^d} 
 = \gk \sum_{r=1}^{n-1} \frac{n-r}{n} \cdot \frac{1}{r^d}
 \leq \gk \zeta(d) < \infty
\end{equation}
by \eqref{eq:qnijest}. 

We further write
\begin{equation}
 f_n(\ga,\gb) := q_n(i(\ga),j(\gb)) \prod_{k=1}^{j(\gb)-1} \left[1-\bigg(\frac{i(\ga)}{n-k}\bigg)^d\,\right],
\end{equation}
so that we can express the left-hand side of \eqref{eq:thmtech} as
\begin{multline}\label{eq:sumasint}
 n^{-\frac{d-1}{d}} \sum_{i=2}^n \sum_{j=1}^{n-i} q_n(i,j) \prod_{k=1}^{j-1} \left[1-\bigg(\frac{i}{n-k}\bigg)^d\,\right]
 = \int_{2n^{-\frac{d-1}{d}}}^{n^{1/d}} \int_{\frac{1}{n}}^{1-\frac{i(\ga)}{n}+\frac{1}{n}} n f_n(\ga,\gb)\Di\gb \Di\ga\\
 = \int_0^\infty \int_0^1 \one\left\{(\ga,\gb)\in M_n\right\} nq_n(i(\ga),j(\gb))  \prod_{k=1}^{j(\gb)-1} \left[1-\bigg(\frac{i(\ga)}{n-k}\bigg)^d\,\right] \Di\gb \Di\ga
\end{multline}
where
\begin{equation}
 M_n=\left\{(\ga,\gb)\in \IR_{\geq 0}\times [0,1]: 2n^{-\frac{d-1}{d}} \leq\ga\leq n^{\frac{1}{d}}, \frac{1}{n} \leq \gb \leq 1-\frac{i(\ga)}{n}+\frac{1}{n} \right\}.
\end{equation}
Thus, it remains to show that
\begin{enumerate}[(a)]
 \item\label{item:goal1} the integrands on the right-hand side of \eqref{eq:sumasint} converge pointwise for all $\ga,\gb$ to the integrand in \eqref{eq:thmtech}; and
 \item\label{item:goal2} the integrands are dominated by an integrable function.
\end{enumerate}
Once these claims are established, the assertion of Theorem~\ref{thm:technical} follows from applying the dominated convergence theorem to equation~\eqref{eq:sumasint}.

\subsection{Proof of Claim~(\ref{item:goal1})}
 Fix $\ga>0, 0<\gb<1$. Then, for $n$ sufficiently large, $2n^{-(d-1)/d}\leq \ga \leq n^{1/d}$, and furthermore, since $i(\ga)/n = \ga n^{-1/d} + o(1)$, we also have $1/n\leq \gb \leq 1-(i(\ga)-1)/n$. Hence, $\one\left\{(\ga,\gb)\in M_n\right\} \to 1$ pointwise as $n\to\infty$. Moreover, by the assumption of the theorem, we have $nq_n(i(\ga),j(\gb))\to \hat q(\ga,\gb)$. For the product, we use the Taylor expansion $\log(1-x)=-x-x^2/2-...$ to obtain
 \begin{equation}\label{eq:prodexp}
  \prod_{k=1}^{j(\gb)-1} \left[1-\bigg(\frac{i(\ga)}{n-k}\bigg)^d\,\right]
  = \exp\left( - \sum_{k=1}^{j(\gb)-1} \bigg(\frac{i(\ga)}{n-k}\bigg)^d - O\Bigg(\sum_{k=1}^{j(\gb)-1} \bigg(\frac{i(\ga)}{n-k}\bigg)^{2d} \Bigg)\right).
 \end{equation}
 Let us consider the sum occurring here. We have
 \begin{align}
  \sum_{k=1}^{j(\gb)-1} \left(\frac{i(\ga)}{n-k}\right)^d
  &= \frac{\floor{\ga n^{(d-1)/d}}^d}{n^{d-1}} \sum_{k=1}^{j(\gb)-1} \frac{1}{n} \frac{1}{(1-k/n)^d}\\
  &= (1+o(1)) \ga^d \int_{1/n}^{\floor{\gb n}/n} (1-x)^{-d} \Di x\\
  &= (1+o(1)) \frac{\ga^d}{d-1} \big((1-\gb)^{1-d}-1\big)
 \end{align}
 by noticing that the sum in the second expression is a Riemann sum for the integral. Repeating this exercise with $2d$ in place of $d$ yields
 \begin{equation}
  \sum_{k=1}^{j(\gb)-1} \left(\frac{i(\ga)}{n-k}\right)^{2d}
  = (1+o(1)) \frac{\ga^{2d}}{n(2d-1)} \big((1-\gb)^{1-2d}-1\big)
  = O(1/n),
 \end{equation}
 so the error term in equation~\eqref{eq:prodexp} is $o(1)$ and we obtain
 \begin{equation}
  \prod_{k=1}^{j(\gb)-1} \left[1-\bigg(\frac{i(\ga)}{n-k}\bigg)^d\,\right] \to
  \exp\left( -\frac{\ga^d}{d-1} \big((1-\gb)^{1-d}-1\big)\right).
 \end{equation}
 Thus, the integrand in \eqref{eq:sumasint} converges pointwise to the one in \eqref{eq:thmtech}.

\subsection{Proof of Claim~(\ref{item:goal2})}
 We first note that, by the assumption on $q_n(i,j)$ in Theorem~\ref{thm:technical}, we have
 \begin{equation}
  nq_n(i(\ga),j(\gb))
  \leq \gk \frac{j(\gb)}{n^d} \frac{i(\ga)^d}{(1-j(\gb)/n)^d}
  \leq C\ga^d \frac{\gb}{(1-\gb)^d}
 \end{equation}
 for all $\ga>0, 0<\gb<1$. Furthermore, we can upper bound the product in \eqref{eq:sumasint} by dropping the $O$-term in equation~\eqref{eq:prodexp}, hence
 \begin{equation}\label{eq:prodest}
  \prod_{k=1}^{j(\gb)-1} \left[1-\bigg(\frac{i(\ga)}{n-k}\bigg)^d\,\right]
  \leq \exp\Bigg( - \sum_{k=1}^{j(\gb)-1} \bigg(\frac{i(\ga)}{n-k}\bigg)^d \Bigg).
 \end{equation}
 Observing that $x\mapsto (n-x)^{-d}$ is a monotonically increasing function for $x<n$, we can derive lower bounds to this sum:
 \begin{align}
  \sum_{k=1}^{j(\gb)-1} \bigg(\frac{i(\ga)}{n-k}\bigg)^d
  &\geq \int_0^{j(\gb)-1} \frac{i(\ga)^d}{(n-x)^d} \Di x
   = \frac{i(\ga)^d}{d-1} \big((n-j(\gb)+1)^{1-d} - n^{1-d}\big)\\
  &\geq \frac{(1+o(1))\ga^d}{d-1} \big((1-\gb)^{1-d} -1\big).
 \end{align}
 In particular, for $\gb\geq 1/2$ (or any other positive constant), we have the corresponding lower bound
 \begin{equation}\label{eq:sumlower}
  \sum_{k=1}^{j(\gb)-1} \bigg(\frac{i(\ga)}{n-k}\bigg)^d \geq C\ga^d (1-\gb)^{1-d}
 \end{equation}
 If instead $\gb$ is close to 0, say $\gb<1/2$, we obtain the elementary bound
 \begin{equation}\label{eq:sumlower2}
  \sum_{k=1}^{j(\gb)-1} \bigg(\frac{i(\ga)}{n-k}\bigg)^d
  \geq (j(\gb)-1) \frac{i(\ga)^d}{n^d}
  \geq C \ga^d \gb.
 \end{equation}

 Combining the bounds \eqref{eq:sumlower} and \eqref{eq:sumlower2} with \eqref{eq:prodest}, we thus arrive at
 \begin{equation}
  nq_n(i(\ga),j(\gb))  \prod_{k=1}^{j(\gb)-1} \left[1-\bigg(\frac{i(\ga)}{n-k}\bigg)^d\,\right]
  \leq C\ga^d \frac{\gb}{(1-\gb)^d} e^{-C\ga^d \gb(1-\gb)^{1-d}}
 \end{equation}
 for $(\ga,\gb)\in\IR_{>0}\times(0,1)$. It is welcome news that this bound is integrable; indeed, by Toninelli's theorem we may first integrate over $\ga$, and by setting $k_\gb=C\gb(1-\gb)^{1-d}$ constant we obtain
 \begin{equation}\label{eq:innerint}
  \int_0^\infty \ga^d e^{-\ga^d k_{\gb}} \Di\ga
  = \int_0^\infty \frac{1}{d k_{\gb}^{1+1/d}} u^{1/d}e^{-u} \Di u
  = \frac{1}{d}\,\gC\!\left(\frac{d+1}{d}\right) k_{\gb}^{-\frac{d+1}{d}},
 \end{equation}
 via the substitution $u=k_{\gb} \ga^d$. Hence
 \begin{align}
  \int_0^\infty \int_0^1 C\ga^d \frac{\gb}{(1-\gb)^d} e^{-K\ga^d \gb(1-\gb)^{1-d}} \Di\gb\Di\ga
  &= C \int_0^1 \gb(1-\gb)^{-d} \big(\gb(1-\gb)^{1-d}\big)^{-\frac{d+1}{d}} \Di\gb\\
  &= C \Beta\!\left(\frac{d-1}{d},\frac{d-1}{d}\right) < \infty,
 \end{align}
 as desired.

\section{Evaluation of Integrals}\label{sec:integrals}

For the proof of Theorem~\ref{thm:jointdesc}, it now only remains to evaluate the integral obtained from applying Theorem~\ref{thm:technical} to the expression in Lemma~\ref{lemma:firstmoment}\ref{item:firstmomentii}. However, in the interest of presentation we will first re-derive the first moment convergence in Theorem~\ref{thm:Janson}, as the computations are similar but easier.

\subsection{Asymptotics for \texorpdfstring{$\IE|D_n|$}{E|Dn|}}\label{ssec:EDn}

Recall the formula \eqref{eq:EDn} for $\IE|D_n|$. In the framework of Theorem~\ref{thm:technical}, this corresponds to a kernel
\begin{equation}
 q_n(i,j) = \left[1-\bigg(\frac{n-j}{n}\bigg)^d\,\right]\bigg(\frac{i}{n-j}\bigg)^d
\end{equation}
with corresponding
\begin{equation}
 \hat q_n(\ga,\gb) \to \hat q(\ga,\gb)= \big(1-(1-\gb)^d\big) \frac{\ga^d}{(1-\gb)^d}.
\end{equation}
Thus, applying Theorem~\ref{thm:technical} yields
\begin{equation}\label{eq:T1doubleint}
 n^{-\frac{d-1}{d}} \IE|D_n|
 \to \int_0^\infty \int_0^1 \frac{1-(1-\gb)^d}{(1-\gb)^d} \ga^d
     \exp\bigg(-\ga^d \frac{(1-\gb)^{1-d}-1}{d-1}\bigg) \Di\gb \Di\ga =: I
\end{equation}
and it only remains to evaulate the integral $I$. By Toninelli's theorem, we may exchange the order of  integration, and the resulting inner integral 
\begin{equation}
 \int_0^\infty \ga^d e^{-\ga^d k_\gb} \Di \gb
\end{equation}
is the same as in equation~\eqref{eq:innerint} with $k_{\gb}=\big((1-\gb)^{1-d}-1\big)/(d-1)$. Plugging the result from there into \eqref{eq:T1doubleint} reveals that
\begin{align}
 I
 &= \frac{1}{d} \gC\bigg(\frac{d+1}{d}\bigg)
  \int_0^1 \frac{1-(1-\gb)^d}{(1-\gb)^d} \bigg(\frac{d-1}{(1-\gb)^{1-d}-1}\bigg)^{1+1/d} \Di\gb\\
 &= \frac{(d-1)^{1/d}}{d} \gC\bigg(\frac{d+1}{d}\bigg)
  \int_0^1 \frac{1-v^{\frac{d}{d-1}}}{v^{\frac{d-1}{d}}(1-v)^{\frac{d+1}{d}}} \Di v
\end{align}
where we substituted $v=(1-\gb)^{d-1}$. It is tempting to split the integrand into a difference, but this only leads to a difference of two divergent beta-integrals. Instead, define for $\Re a>0, \Re b>-1$ the function
\[
 f_{a,b}(s) := \int_0^1 (1-v^a)v^b(1-v)^s \Di v.
\]
This integral converges absolutely for $\Re s>-2$ since for $v\to 1$ we have $|(1-v^a)v^b(1-v)^s|=(1-v)^{\Re s} O(1-v)$. Moreover, on a compact set $K\ssq\{\Re s>-2\}$, the integrand is dominated for the same reason by $C_K v^{\Re b}(1-v)^{1+r_K}$ where $r_K=\min_{z\in K} \Re z$ and $C_K>0$ a constant. Since the integrand is also an analytic function on $\{\Re s>-2\}$ for every fixed $v\in (0,1)$, it follows from differentiation under the integral sign that $f_{a,b}(s)$ is analytic on $\{\Re s>-2\}$.

Additionally, for $\Re s >-1$, we can write
\[
 f_{a,b}(s) = \int_0^1 v^b(1-v)^s \Di v - \int_0^1 v^{a+b} (1-v)^s\Di v = \Beta(b+1,s+1) - \Beta(a+b+1,s+1)
\]
and by analytic continuation, the resulting identity between $f_{a,b}$ and the Beta function remains valid on the larger domain $\{\Re s>-2\}$. Returning to $I$, we thus obtain
\begin{equation}\label{eq:Ialmostfinal}
 I= \frac{(d-1)^{1/d}}{d} \gC\bigg(\frac{d+1}{d}\bigg)
    \left[ \Beta\bigg(\frac{1}{d},-\frac{1}{d}\bigg) - \Beta\bigg(\frac{d}{d-1}+\frac{1}{d},-\frac{1}{d}\bigg)\right].
\end{equation}
Here, we use the analytic continuation of the Gamma function to $\IC\setminus\{0,-1,-2,\dots\}$ to evaluate $\Beta(z,-1/d) = \gC(z)\gC(-1/d)/\gC(z-1/d)$. For $z=1/d$, this is still meaningfully defined as 0 since $\gC(z)^{-1}$ is an entire function; the other beta function is evaluated to be
\begin{equation}
 \Beta\bigg(\frac{d}{d-1}+\frac{1}{d},-\frac{1}{d}\bigg)
 = \frac{\gC\left(\frac{d}{d-1}+\frac{1}{d}\right) \gC\left(-\frac{1}{d}\right)}{\gC\left(\frac{d}{d-1}\right)}
 = -\frac{\pi}{\sin(\pi/d)} \frac{\gC\left(\frac{d}{d-1}+\frac{1}{d}\right)}{\gC\left(\frac{d}{d-1}\right) \gC\left(\frac{d+1}{d}\right)},
\end{equation}
where we used the reflection identity $\gC(z)\gC(1-z)=\frac{\pi}{\sin(\pi z)}$ for $z=-1/d$. Plugging this into \eqref{eq:Ialmostfinal} simplifies to the expression in \eqref{eq:thmSvante}.

\subsection{Proof of Theorem~\ref{thm:jointdesc}}\label{ssec:Ejointdesc}

This proof now runs analogously to the derivation in Section~\ref{ssec:EDn}: Applying Theorem~\ref{thm:technical} to the expression obtained in Lemma~\ref{lemma:firstmoment}\ref{item:firstmomentii}, we obtain
\begin{equation}
 n^{-\frac{d-1}{d}} \IE|D_n\cap D_{n+1}|
 \to \int_0^\infty \int_0^1 \frac{\big(1-(1-\gb)^d\big)^2}{(1-\gb)^d} \ga^d
     \exp\bigg(-\ga^d \frac{(1-\gb)^{1-d}-1}{d-1}\bigg) \Di\gb \Di\ga =: I.
\end{equation}
Nothing changes in the application of Toninelli's theorem and the evaluation of the integral in \eqref{eq:innerint}, which leaves us with
\begin{align}
 I
 &= \frac{1}{d} \gC\bigg(\frac{d+1}{d}\bigg)
    \int_0^1 \frac{\big(1-(1-\gb)^d\big)^2}{(1-\gb)^d} \bigg(\frac{d-1}{(1-\gb)^{1-d}-1}\bigg)^{1+1/d} \Di\gb\\
 &= \frac{(d-1)^{1/d}}{d} \gC\bigg(\frac{d+1}{d}\bigg)
    \int_0^1 \frac{\left(1-v^{\frac{d}{d-1}}\right)^2}{v^{\frac{d-1}{d}}(1-v)^{\frac{d+1}{d}}} \Di v.
\end{align}
We now define the slightly modified auxiliary function
\begin{equation}
 f_{a,b}(s)=\int_0^1 (1-v^a)^2 v^b (1-v)^s \Di s
\end{equation}
for $\Re a>0, \Re b>-1$. Repeating the argument from Section~\ref{ssec:EDn} reveals that $f_{a,b}$ is analytic on $\{\Re s>-3\}$, and once again we obtain through analytic continuation (after expanding the binomial term) that
\begin{equation}
 I= \frac{(d-1)^{1/d}}{d} \gC\bigg(\frac{d+1}{d}\bigg) \left[\Beta\bigg(\frac{1}{d},-\frac{1}{d}\bigg) - 2\Beta\bigg(\frac{d}{d-1}+\frac{1}{d},-\frac{1}{d}\bigg) + \Beta\bigg(\frac{2d}{d-1}+\frac{1}{d},-\frac{1}{d}\bigg)\right]
\end{equation}
which can be evaluated as above. We obtain \eqref{eq:thmjointdesc}.

\section{Proof of Theorem~\ref{thm:limit}}\label{sec:Prooflimit}

Throughout the proof, we rely on the stopping time $T$ from Remark~\ref{rem:stopping}, as it makes it easier to write down exact expressions for a number of probabilities.

Beginning with (\ref{it:prop1}), let $\gep>0$. By \eqref{eq:aindistr}, we have
\begin{align}
 \IP\left(\frac{T}{n}<1-\gep\right)
 &= \sum_{j=1}^{\floor{(1-\gep)n}} \left(\frac{i-1}{n-j}\right)^d \prod_{k=1}^{j-1} \left[1-\left(\frac{i-1}{n-k}\right)^d\right]\\
 &\leq \sum_{j=1}^{\floor{(1-\gep)n}} \left(\frac{i-1}{n-j}\right)^d
 \leq (1-\gep)n \left(\frac{i}{\gep n}\right)^d = o(1),
\end{align}
since $i=o\big(n^{(d-1)/d}\big)$. On the other hand, $a_i(n)/n\leq 1$ holds trivially, and the assertion follows.

Regarding part (\ref{it:prop2}), we emulate parts of the proof of Theorem~\ref{thm:technical} in Section~\ref{sec:technical}. To be precise, fix $\gc\in (0,1)$ and consider
\begin{align}
 \IP\big( T\leq \gc n \big)
 &=\sum_{j=1}^{\floor{\gc n}} \left(\frac{i-1}{n-j}\right)^d \prod_{k=1}^{j-1} \left[1-\left(\frac{i-1}{n-k}\right)^d\right]\\
 &= n \int_0^1 \one\left\{\frac{1}{n}\leq \gb\leq \frac{\floor{\gc n}+1}{n}\right\} \left(\frac{i-1}{n-j(\gb)}\right)^d \prod_{k=1}^{j(\gb)-1} \left[1-\left(\frac{i-1}{n-k}\right)^d\right] \Di\gb
\end{align}
Just as in the proof of Claim~(\ref{item:goal1}), in particular by recycling the estimates applied to equation~\eqref{eq:prodexp}, one can show that the integrand converges pointwise to
\begin{equation}
 \one\{0<\gb\leq \gc\} \ga^d (1-\gb)^{-d} e^{-\frac{\ga^d}{d-1}\big((1-\gb)^{1-d}-1\big)},
\end{equation}
and moreover from the proof of Claim~(\ref{item:goal2}) we can provide a majorant of the form
\begin{equation}
 C\ga^d (1-\gb)^{-d} e^{-C\ga^d\gb(1-\gb)^{1-d}}
\end{equation}
which is integrable even over the entire interval $[0,1]$ (the singularity at $\gb=1$ is compensated by the exponential decay of $e^{(1-\gb)^{1-d})}$). By dominated convergence this yields the convergence to a random variable $\Xi_\ga$ with density function $\ga^d (1-\gb)^{-d} e^{-\frac{\ga^d}{d-1}\big((1-\gb)^{1-d}-1\big)}$, which can be integrated to obtain the cumulative distribution function in \eqref{eq:thmlimitXi}. Moreover, since $T/n$ is bounded, we can conclude the moment convergence in (\ref{it:prop2}).

For the proof of (\ref{it:prop3}), fix $x>0$ and compute
\begin{align}\label{eq:dtoexp}
 \IP\big(T > x(n/i)^d \big)
 &= \prod_{k=1}^{\floor{x(n/i)^d}} \left[1-\left(\frac{i-1}{n-k}\right)^d\right]\notag \\
 &= \exp\left(-\sum_{k=1}^{\floor{x(n/i)^d}} \left(\frac{i-1}{n-k}\right)^d + O\Bigg( \sum_{k=1}^{\floor{x(n/i)^d}} \left(\frac{i-1}{n-k}\right)^{2d} \Bigg) \right).
\end{align}
Note that for $i=\go\big(n^{(d-1)/d}\big)$ with $k\leq x (n/i)^d$, we have $n-k=n(1-o(1))$ uniformly in $k$. Accordingly, we have
\begin{equation}
 \sum_{k=1}^{\floor{x(n/i)^d}} \left(\frac{i-1}{n-k}\right)^{2d}
 = (1+o(1)) x \left(\frac{n}{i}\right)^d \left(\frac{i}{n}\right)^{2d} =o(1)
\end{equation}
since $i=o(n)$, so the $O$-term vanishes, and similarly,
\begin{equation}
 \sum_{k=1}^{\floor{x(n/i)^d}} \left(\frac{i-1}{n-k}\right)^d
 = (1+o(1)) x.
\end{equation}
Plugging this into \eqref{eq:dtoexp} yields $\IP\big(T> x(n/i)^d\big)\to e^{-x}$ and establishes convergence in distribution. Convergence of the moments follows by a standard uniform integrability argument: We note that by replacing the $o(1)$-terms in the evalutaion of \eqref{eq:dtoexp} with explicit constants, we obtain $\IP\big(T> x(n/i)^d\big) < e^{-cx}$ for some $0<c<1$. We now choose $0<\gc<c$, so that
\begin{align}
 \IE \left[ e^{\gc (i/n)^d T}\right]
 &= \int_0^\infty \IP\left(e^{\gc (i/n)^d T} > u \right) \Di u
  = 1 + \int_0^\infty \gc e^{\gc v} \IP\big( (i/n)^d T > v\big)\Di v\\
 &\leq 1 + \gc \int_0^\infty e^{(\gc-c)v} \Di v.
\end{align}
Hence
\begin{equation}
 \sup_n \IE\left[ e^{\gc (i/n)^d T}\right] < \infty,
\end{equation}
which implies that the $r$-th moments of $(i/n)^d T$ are uniformly bounded for each $r\in \IN$. Thus $(i/n)^{rd} T^r$ is uniformly integrable for each $r$, and the moments converge.

For part (\ref{it:prop4}) we consider $i\sim \gb n$ and note that $p_{n,i}(j)\to \gb^d$ pointwise for  $j\in\IN$. Thus
\begin{equation}
 \IP(T=j) \to \gb^d (1-\gb^d)^{j-1}
\end{equation}
for all $j\in\IN$, which yields convergence in distribution. The moment convergence follows similarly to part (\ref{it:prop3}) after observing that for all $j$, eventually we have $\IP(T\geq j) \leq (1-(\gb/2)^d)^{j-1}$.

\section{Proof of Theorem~\ref{thm:threshold}}\label{sec:Proofthreshold}

\subsection{Proof of Theorem~\ref{thm:threshold}, lower bound}\label{ssec:Prooflower}
We establish the lower bound on the threshold in Theorem~\ref{thm:threshold} by showing the following about the expected number of non-descendants up to the threshold regime:
\begin{lemma}
 As $n\to\infty$, the expected number of non-descendants of $n$ in $G_n$ among the first $m$ vertices satisfies
 \begin{equation}\label{eq:nondesc}
  \IE \big|[m]\setminus D_n\big| \to \frac{1}{d+1}\ga^{d+1}
 \end{equation}
 for $m\sim \ga n^{\frac{d-1}{d+1}}$.
\end{lemma}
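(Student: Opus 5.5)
The plan is to compute $\IP(i\notin D_n)$ exactly for each $i$ using Lemma~\ref{lemma:aindistr}, and then to sum over $i\le m$. By \eqref{eq:keyobs}, $i\notin D_n$ exactly when $a_i(n)=a_i(n-1)$. (For $i=1$ this never happens, so that term contributes $0$.) For $2\le i\le m$ the law of total probability together with \eqref{eq:aintransitionp} and \eqref{eq:aindistr}, applied at time $n-1$, gives
\begin{equation}
 \IP(i\notin D_n)=\sum_{j=1}^{n-i}\bigg(\frac{n-1-j}{n-1}\bigg)^d\bigg(\frac{i-1}{n-1-j}\bigg)^d\prod_{k=1}^{j-1}\left[1-\bigg(\frac{i-1}{n-1-k}\bigg)^d\right].
\end{equation}
The factors $(n-1-j)^d$ cancel. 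The remaining sum $\sum_j\prod_{k<j}[\dots]$ equals $\sum_{j\ge1}\IP(T\ge j)=\IE T$, where $T$ is the stopping time of Remark~\ref{rem:stopping} with parameters $(n-1,i)$; note that $\IP(T\ge j)=0$ for $j>n-i$. Hence we get the exact identity
\begin{equation}
 \IP(i\notin D_n)=\bigg(\frac{i-1}{n-1}\bigg)^d\,\IE\, a_i(n-1).
\end{equation}

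Next we show that $\IE\, a_i(n-1)=(1+o(1))n$ uniformly in $2\le i\le m$. The upper bound $a_i(n-1)\le n$ is trivial. For the lower bound we reuse the estimate from the proof of Theorem~\ref{thm:limit}(\ref{it:prop1}):
\begin{equation}
 \IP\big(a_i(n-1)<(1-\gep)n\big)\le n\Big(\frac{m}{\gep n}\Big)^d .
\end{equation}
This bound is uniform in $i\le m$ and tends to $0$, because $m=O(n^{(d-1)/(d+1)})=o(n^{(d-1)/d})$. Therefore $\IE\, a_i(n-1)\ge(1-\gep)(1-o(1))n$ uniformly in $i\le m$. Uniformity is the only delicate point, and this explicit bound settles it, so no monotone coupling in $i$ is needed.

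Summing the identity over $i$ then gives
\begin{equation}
 \IE\big|[m]\setminus D_n\big|=(1+o(1))\,\frac{n}{(n-1)^d}\sum_{i=2}^{m}(i-1)^d=(1+o(1))\,\frac{m^{d+1}}{(d+1)\,n^{d-1}} .
\end{equation}
Since $m^{d+1}\sim\ga^{d+1}n^{d-1}$, the right-hand side converges to $\ga^{d+1}/(d+1)$, which is \eqref{eq:nondesc}. I expect no step to be a serious obstacle; the key observation is the cancellation that reduces the double sum to $\IE T$.
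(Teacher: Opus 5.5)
Your proof is correct, and it differs from the paper's at the one substantive step.

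Both arguments start from the same total-probability formula, in which the factors $(n-1-j)^d$ cancel. Both must then show that the remaining inner sum $\sum_{j}\prod_{k<j}[\cdots]$ is $(1+o(1))n$ uniformly in $i\le m$.

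The paper does this analytically. It reparametrises $j=\floor{\gb n}$ and reuses the Taylor/Riemann-sum expansion from the proof of Theorem~\ref{thm:technical} to show that the product tends to $1$ for each fixed $\gb$. It then appeals to dominated convergence with the majorant $(n-i)(i/n)^d$.

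You instead read the inner sum as the tail-sum formula for the stopping time $T$ of Remark~\ref{rem:stopping}. This gives the exact identity $\IP(i\notin D_n)=\big(\frac{i-1}{n-1}\big)^d\,\IE\,a_i(n-1)$. You then import the first-moment bound from the proof of Theorem~\ref{thm:limit}(\ref{it:prop1}), which is explicitly uniform in $i\le m$.

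Your route is shorter and handles uniformity more cleanly. The paper's pointwise estimate of the product degenerates as $\gb\to1$, and its final dominated-convergence step over $i$ leaves the uniformity implicit.

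Your identity is also useful beyond this lemma, because it holds for every $i$. Combined with the moment convergence in Theorem~\ref{thm:limit}(\ref{it:prop2}) and $\IE\,\Xi_M=\int_0^1\IP(\Xi_M>\gb)\Di\gb$, it yields Lemma~\ref{lemma:notDn} at once.

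The paper's route, in contrast, keeps everything within the single computational framework of Section~\ref{sec:technical} and needs no probabilistic reinterpretation of the sum.
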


This lemma, combined with the first moment method, yields
\[
 \IP\big( [m]\setminus D_n \neq\emptyset \big) \leq \IE \big| [m]\setminus D_n \big| \to \frac{1}{d+1}\ga^{d+1}
\]
and in particular the first claim in \eqref{eq:proofstratthreshold}, after choosing $\ga>0$ sufficiently small.

\begin{proof}
 For $i,m\in[n]$, define $Y_i = \one\{i\notin D_n\}$ and $X_m = \big| [m]\setminus D_n \big|$, so that
 \begin{equation}
  \IE X_m = \IE \big| [m]\setminus D_n \big| = \sum_{i\leq m} \IE Y_i.
 \end{equation}
 Emulating the proof of Lemma~\ref{lemma:firstmoment}, we note that
 \begin{align*}
  \IE Y_i
  &= \IE \one\{a_i(n)=a_i(n-1)\}
   = \sum_{j=1}^{n-i} \IP\big(a_i(n)=j \mid a_i(n-1)=j \big) \IP(a_i(n-1)=j)\\
  &= \sum_{j=1}^{n-i} \left(\frac{n-1-j}{n-1}\right)^d \left(\frac{i-1}{n-1-j}\right)^d \cdot \prod_{k=1}^{j-1} \left[1-\left(\frac{i-1}{n-1-k}\right)^d\right],
 \end{align*}
 which yields (after replacing $n$ by $n+1$ and shifting $i$ by 1, noting that $\IE Y_1=0$)
 \begin{equation}\label{eq:EXm}
  \IE X_m = \sum_{i=1}^{m-1} \sum_{j=1}^{n-i} \left(\frac{i}{n}\right)^d \cdot \prod_{k=1}^{j-1} \left[1-\left(\frac{i}{n-k}\right)^d\right].
 \end{equation}
 Assume now $m\sim\ga n^{(d-1)/(d+1)}$ for $\ga > 0$. We first observe that the product converges to 1, since after reparametrizing $j=j(\gb)=\floor{\gb n}$ for $\gb\in(0,1)$, we can imitate equation~\eqref{eq:prodexp} and the estimates below it, to obtain
 \begin{multline}
  1
  \geq \prod_{k=1}^{j-1} \left[1-\left(\frac{i}{n-k}\right)^d\right]
  = \exp\left( -\frac{i^d}{n^d}\sum_{k=1}^{j(\gb)-1}\!\! \frac{1}{(1-k/n)^d} + O \Bigg( \frac{i^{2d}}{n^{2d}} \sum_{k=1}^{j(\gb)-1}\!\! \frac{1}{(1-k/n)^{2d}} \Bigg)  \right)\\
  = \exp\left( - O\left(n^{-\frac{d-1}{d+1}}\right) \frac{(1-\gb)^{1-d} -1}{d-1} + O\left( n^{-\frac{3d-1}{d+1}} \frac{(1-\gb)^{1-d}-1}{d-1}\right) \right)
  = \exp(-o(1)),
 \end{multline}
 where we used $i\leq m$. Hence, by comparison to an integral,
 \begin{align}
  \IE X_m
  &\leq \sum_{i=1}^m (n-i) \frac{i^d}{n^d}
  = n^{1-d} \sum_{i=1}^m i^d - n^{-d} \sum_{i=1}^m i^{d+1}\\
  &\sim \frac{1}{d+1}n^{1-d}m^{d+1} - \frac{1}{d+2} n^{-d}m^{d+2}
  \sim \frac{1}{d+1} \ga^{d+1}.
 \end{align}
 This achieves two things: For one, the majorant $(n-i)(i/n)^d$ to the $i$-summand in equation~\eqref{eq:EXm} is summable; but moreover we have seen that this majorant is also the pointwise limit of the summands. Thus, \eqref{eq:nondesc} follows by the dominated convergence theorem.
\end{proof}

\subsection{Earliest sources. Proof of Theorem~\ref{thm:sources}}\label{ssec:Proofsources}
In this section we determine the location of the earliest source in $G_n$, starting with a lemma concerning the joint probability of $r$ vertices to be sources.

\begin{lemma}\label{lemma:jointsources}
 Let $1\leq i_1 < \dots < i_r \leq n$ for some $r\in\IN$. Then
 \begin{equation}\label{eq:jointsources}
  \IP\big(a_{i_1}(n)=\dots=a_{i_r}(n)=1) = \prod_{j=1}^r \left(\frac{i_j-j}{n-j}\right)^d.
 \end{equation}
\end{lemma}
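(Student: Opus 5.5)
The plan is to compute the probability directly from the construction of $G_n$, using only the independence of the arrow choices of different vertices. We do not use the ancestry Markov chain. First, $a_{i}(n)=1$ holds if and only if $i$ has in-degree $0$ in $G_n$: any ancestor $j\neq i$ of $i$ reaches $i$ by a directed path, and the last arrow of that path ends in $i$. So the event in \eqref{eq:jointsources} says that no arrow of $G_n$ points into $S:=\{i_1,\dots,i_r\}$.

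Fix a vertex $m\in\{2,\dots,n\}$ and let $c_m:=|\{j: i_j\leq m-1\}|$ be the number of elements of $S$ among the possible targets $[m-1]$ of $m$. The $d$ arrows from $m$ are independent and uniform on $[m-1]$. The event that all of them avoid $S$ therefore has probability $\big((m-1-c_m)/(m-1)\big)^d$, and these events are independent over $m$. Hence
\begin{equation}
 \IP\big(a_{i_1}(n)=\dots=a_{i_r}(n)=1\big) = \prod_{m=2}^{n} \left(\frac{m-1-c_m}{m-1}\right)^d .
\end{equation}
Since every factor carries the same exponent $d$, it suffices to show that $\prod_{m} (m-1-c_m)/(m-1)=\prod_{j=1}^r (i_j-j)/(n-j)$.

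To evaluate this product, set $l=m-1$ and split the range of $l$ into blocks according to the value of $c_m$. For $l<i_1$ we have $c_m=0$, so the factor is $1$. For $i_s\leq l\leq i_{s+1}-1$, with the convention $i_{r+1}:=n$ and the last block ending at $l=n-1$, we have $c_m=s$. Each block then telescopes:
\begin{equation}
 \prod_{l=i_s}^{i_{s+1}-1}\frac{l-s}{l} = \frac{P_s(i_s)}{P_s(i_{s+1})}, \qquad P_s(a):=\prod_{t=1}^{s}(a-t).
\end{equation}
This holds because the numerator runs over $i_s-s,\dots,i_{s+1}-1-s$ and the denominator over $i_s,\dots,i_{s+1}-1$, so after cancellation only the first $s$ numerator terms and the last $s$ denominator terms remain.

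Taking the product over $s=1,\dots,r$, we use $P_{s+1}(i_{s+1})=P_s(i_{s+1})\,(i_{s+1}-s-1)$. With this, all intermediate factors cancel and we are left with
\begin{equation}
 \prod_{j=1}^r (i_j-j) \Big/ P_r(n) = \prod_{j=1}^r \frac{i_j-j}{n-j}.
\end{equation}
A quick check confirms the bookkeeping: for $r=1$ we get $(i_1-1)/(n-1)$, and for $r=2$ we get $(i_1-1)(i_2-2)/\big((n-1)(n-2)\big)$.

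The only step needing care is this index bookkeeping. One must handle the block boundaries correctly, since vertex $i_s$ itself is not among its own possible targets, and the final block ends at $n$ rather than $n+1$. Degenerate cases such as consecutive $i_s$ (empty blocks) or $i_1=1$ (where $a_1(n)=n$, so the probability vanishes) need a separate look. In the latter case the factor $i_1-1=0$ makes the formula consistent with a probability of $0$.
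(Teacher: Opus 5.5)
Your proof is correct and is essentially the paper's argument with the induction on $r$ unrolled. The paper conditions on the event at time $i_{r+1}$, applies the induction hypothesis (base case from Lemma~\ref{lemma:aindistr}), and telescopes the avoidance product $\prod_{k=i_{r+1}+1}^{n}\big((k-r-2)/(k-1)\big)^d$; that is exactly your last block, with $r+1$ in place of $r$. You instead write the whole probability as one product over all vertices and telescope every block at once. One small slip: consecutive $i_s$ do not produce empty blocks (only $i_r=n$ does), but your telescoping identity holds for every block regardless.
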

\begin{proof}
 For brevity, write $\mc E_{i_1,\dots,i_r}(n)$ for the event $\{a_{i_1}(n)=\dots=a_{i_r}(n)=1\}$.

 We use induction in $r$, with the base case being obtained from Lemma~\ref{lemma:aindistr}. Suppose \eqref{eq:jointsources} holds for some $r\geq 1$. Then, with $i_r< i_{r+1}\leq n$, we jointly condition the ancestor processes on the event that they are all $1$ at time $i_{r+1}$:
 \begin{equation}
  \IP\big( \mc E_{i_1,\dots,i_r,i_{r+1}}(n) \big)
  = \IP\big( \mc E_{i_1,\dots,i_r,i_{r+1}}(i_{r+1}) \big) \cdot \IP\big( \mc E_{i_1,\dots,i_r,i_{r+1}}(n) \mid \mc E_{i_1,\dots,i_r,i_{r+1}}(i_{r+1}) \big).
 \end{equation}
 Since $a_{i_{r+1}}(i_{r+1})=1$ by equation~\eqref{eq:aintrivial}, we have $\mc E_{i_1,\dots,i_r,i_{r+1}}(i_{r+1}) = \mc E_{i_1,\dots,i_r}(i_{r+1})$, and the induction hypothesis yields
 \begin{equation}\label{eq:jointsourceproof}
  \IP\big( \mc E_{i_1,\dots,i_r,i_{r+1}}(n) \big) = \IP\big( \mc E_{i_1,\dots,i_r,i_{r+1}}(n) \mid \mc E_{i_1,\dots,i_r,i_{r+1}}(i_{r+1}) \big) \cdot \prod_{j=1}^r \left(\frac{i_j-j}{i_{r+1}-j}\right)^d.
 \end{equation}
 For the conditional probability, note that every vertex $i=i_{r+1}+1,\dots,n$ in the $d$-DAG process needs to connect its $d$ outgoing arrow to any vertex that is not among $i_1,...,i_{r+1}$. This happens with probability
 \[
  \IP\big( \mc E_{i_1,\dots,i_r,i_{r+1}}(n) \mid \mc E_{i_1,\dots,i_r,i_{r+1}}(i_{r+1}) \big)
  = \prod_{k=i_{r+1}+1}^n \left( \frac{k-(r+1)-1}{k-1} \right)^d
  = \prod_{j=1}^{r+1} \left( \frac{i_{r+1}-j}{n-j} \right),
 \]
 after some telescoping. Plugging this into \eqref{eq:jointsourceproof}, we obtain the right-hand side of \eqref{eq:jointsources} for $r+1$, thus concluding the proof.
\end{proof}

\begin{proof}[Proof of Theorem~\ref{thm:sources}]
 We use the method of moments to establish Poisson convergence, showing that $\IE Z_M^{\underline{r}} := \IE\big[Z_M(Z_M-1)\cdots (Z_M-r+1)\big] \to M^{r(d+1)}/(d+1)^r$ for all $r\in\IN$ (for details on the method, see \cite[Chapter~6.1]{JLR00}). To this end, we introduce the indicator random variables
 \[
  Z(i) := \one\{i \text{ is a source in } G_n\} = \one\{a_i(n)=1\}
 \]
 for $i\in[n]$, and note that hence $Z_M=\sum_{i=1}^{\floor{Mn^{d/(d+1)}}} Z(i)$. By Lemma~\ref{lemma:aindistr}, we have $\IE Z(i)=\left(\frac{i-1}{n-1}\right)^d$. After setting $i(\gb):=\beta n^{d/(d+1)}$ and writing the sum
 \[
  \IE Z_M
  = \sum_{i=1}^{\floor{Mn^{d/(d+1)}}} \IE Z(i)
  = \sum_{i=1}^{\floor{Mn^{d/(d+1)}}} \left(\frac{i-1}{n-1}\right)^d
 \]
 as an integral over $\gb$ like in the proofs of Theorems~\ref{thm:technical} and \ref{thm:limit}, we obtain
 \begin{equation}\label{eq:EZM}
  \IE Z_M
  \sim n^{\frac{d}{d+1}} \int_0^M \Bigg(\frac{\gb n^{\frac{d}{d+1}}}{n-1}\Bigg)^d \Di\gb
  \sim \int_0^M \gb^d \Di\gb = \frac{M^{d+1}}{d+1}.
 \end{equation}

 For the higher factorial moments, we similarly write
 \begin{equation}
  \IE Z_M^{\underline r}
  = \sum_{\substack{i_1,\dots,i_r=1\\ \text{distinct}}}^{\floor{Mn^{d/(d+1)}}} \IE\Bigg[\prod_{j=1}^r Z(i_j)\Bigg]
  = \sum_{\substack{i_1,\dots,i_r=1\\ \text{distinct}}}^{\floor{Mn^{d/(d+1)}}} \IP\big( a_{i_1}(n)=\dots=a_{i_r}(n)=1 \big).
 \end{equation}
 With the help of Lemma~\ref{lemma:jointsources}, this can be written as
 \begin{equation}
  \IE Z_M^{\underline r} = \gS - \gS_{\tt{nad}}
  := \sum_{i_1,\dots,i_r} \prod_{j=1}^r \left( \frac{i_j-j}{n-j} \right)^d \ \ - \!\!\! \sum_{\substack{i_1,\dots,i_r\\ \text{ not all distinct}}} \prod_{j=1}^r \left( \frac{i_j-j}{n-j} \right)^d.
 \end{equation}
 The first sum factors neatly:
 \begin{equation}
  \gS = \prod_{j=1}^r \Bigg(\sum_{i_j=1}^{\floor{Mn^{d/d(+1)}}} \left(\frac{i_j-j}{n-j}\right)^d \Bigg) \sim \left(\frac{M^{d+1}}{d+1}\right)^r,
 \end{equation}
 where the asymptotic equivalence holds by (minor modifications of) \eqref{eq:EZM}. It thus remains to show that $\gS_{\tt{nad}}=o(1)$. To see this, we cover the set of $r$-tuples contributing to $\gS_{\tt{nad}}$ by the sets $S_{p,q}:=\{(i_1,\dots,i_r):i_p=i_q\}$ for $p,q\in[r], p < q$. On these sets, we have
 \begin{equation}
  \sum_{(i_1,\dots,i_r)\in S_{p,q}} \prod_{j=1}^r \left( \frac{i_j-j}{n-j} \right)^d
  = \Bigg( \sum_{i=1}^{\floor{Mn^{d/(d+1)}}} \left( \frac{(i-p)(i-q)}{(n-p)(n-q)} \right)^d \Bigg) \cdot
    \prod_{\substack{j=1\\ j\neq p,q}}^r \Bigg( \sum_{i_j=1}^{\floor{Mn^{d/(d+1)}}} \left(\frac{i_j-j}{n-j}\right)^d \Bigg).
 \end{equation}
 For the first factor, we note that
 \begin{equation}
  \sum_{i=1}^{\floor{Mn^{d/(d+1)}}} \left( \frac{(i-p)(i-q)}{(n-p)(n-q)} \right)^d
  \leq C \sum_{i=1}^{\floor{Mn^{d/(d+1)}}} \left( \frac{i}{n}\right)^{2d}
  \leq C n^{-2d} n^{\frac{(2d+1)d}{d+1}} = Cn^{-\frac{d}{d+1}} = o(1),
 \end{equation}
 and all remaining factors are $\sim \IE Z_M=O(1)$ by \eqref{eq:EZM}. Hence
 \begin{equation}
  \gS_{\tt{nad}} \leq \sum_{1\leq p<q\leq r} \sum_{(i_1,\dots,i_r)\in S_{p,q}} \prod_{j=1}^r \left( \frac{i_j-j}{n-j} \right)^d = o(1),
 \end{equation}
 and the convergence claim follows from the method of moments.

 The whp-statements now follow from considering $\IP(Z_M=0)\sim e^{-M^{d+1}/(d+1)}$ for values of $M$ that are arbitrarily small, respectively large.
\end{proof}

We briefly sketch how to extend the above proof to yield the convergence to a Poisson point process asserted in Remark~\ref{rem:PPP}. First note that $\IE Z_n([a,b])\sim \int_a^b \gb^d\Di \gb$, by a straightforward modification of equation~\eqref{eq:EZM}; all that remains to show is that the mixed factorial moments converge. To be precise, we have to show that for all $k\geq 1$, all $r_1,\dots,r_k\geq 1$, and all intervals $[a_1,b_1],\dots,[a_k,b_k]$ with $a_1\leq b_1<\dots<a_k\leq b_k$, we have 
\begin{equation}\label{eq:PPPproofsketch}
 \IE\left[ Z_n^{\underline{r_1}}([a_1,b_1]) \cdots Z_n^{\underline{r_k}}([a_k,b_k]) \right]
 \to \prod_{j=1}^k \left(\frac{b_j^{d+1}-a_j^{d+1}}{d+1}\right)^{r_j}.
\end{equation}
Denoting the limiting point process by $Z$, this shows that $\left(Z_n^{\underline{r_1}}([a_1,b_1]),\dots,Z_n^{\underline{r_k}}([a_k,b_k])\right)\dto \left(Z^{\underline{r_1}}([a_1,b_1]),\dots,Z^{\underline{r_k}}([a_k,b_k])\right)$ where the random variables in the limit are independent by \cite[Theorem~6.10]{JLR00}. The assertion then follows from \cite[Theorem~16.16]{Kal02}. 

We may write the left-hand side of \eqref{eq:PPPproofsketch} as a sum over tuples 
\begin{equation}
 (i_{1,1},\dots,i_{1,r_1}),\dots,(i_{k,1},\dots,i_{k,r_k})
\end{equation}
such that $\floor{a_j n^{d/(d+1)}}\leq i_{j,1},\dots,i_{j,r_j} \leq \floor{b_j n^{d/(d+1)}}$ for all $j=1,\dots,k$. Again the dominant part comes from the case when all entries are distinct, leading to the desired limit. The remaining case can again be covered by $O(1)$ sets $S_{j,p,q}$, consisting of those collections of tuples for which $i_{j,p}=i_{j,q}$; note that since the intervals $[a_j,b_j]$ are disjoint, identical indices can only occur within one value of $j$. The sum over $S_{j_0,p,q}$ will factor according to $j=1,\dots,k$, where by the previous computations the factors for $j\neq j_0$ will be of size $O(1)$. The remaining factor for $j=j_0$ contributes $o(1)$ to the product, so $S_{j_0,p,q}$ only contributes $o(1)$ in total, as in the proof of Theorem~\ref{thm:sources}.

\subsection{Limiting probability for decendancy}\label{ssec:Limprob}
We use this section to prove two statements about the probability that a vertex in $\gT\big( n^{d/(d+1)} \big)$ is a non-descendant of $n$.

Janson already observed that the descendants are concentrated in the range $O\big( n^{(d-1)/d} \big)$, where they have a non-trivial density in the limit, cf. \cite[Lemma~9.1]{Jan2023} for the case $d=2$. The following lemma complements this in a form that will be more useful for our purposes:
\begin{lemma}\label{lemma:notDn}
 As $n\to\infty$, we have for $M>0$ fixed
 \begin{equation}\label{eq:notDn}
  \IP\Big(\floor{Mn^{\frac{d-1}{d}}} \notin D_n \Big) \to M^d \int_0^1 e^{-\frac{M^d}{d-1}\big((1-\gb)^{1-d}-1\big)} \Di \gb.
 \end{equation}
\end{lemma}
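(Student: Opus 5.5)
We compute $\IP(i\notin D_n)$ exactly via the ancestry process and then pass to the limit along the same lines as in the proof of Theorem~\ref{thm:limit}(\ref{it:prop2}). With $i=\floor{Mn^{(d-1)/d}}$, observation~\eqref{eq:keyobs}, the transition probability \eqref{eq:aintransitionp} and Lemma~\ref{lemma:aindistr} give, exactly as in the computation of $\IE Y_i$ in Section~\ref{ssec:Prooflower},
\begin{equation}
 \IP(i\notin D_n)=\sum_{j=1}^{n-i}\left(\frac{i-1}{n-1}\right)^d\prod_{k=1}^{j-1}\left[1-\left(\frac{i-1}{n-1-k}\right)^d\right],
\end{equation}
since the factor $\big((n-1-j)/(n-1)\big)^d$ cancels against the denominator of $p_{n-1,i}(j)$. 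Observe that $\big((i-1)/(n-1)\big)^d\sim M^d n^{-1}$ does not depend on $j$.

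Next, we rewrite the sum as an integral in $\gb$ via $j=j(\gb)=\floor{\gb n}$, as in \eqref{eq:sumasint}:
\[
 \IP(i\notin D_n)=\int_0^1 \one\left\{\tfrac1n\le\gb\le \tfrac{n-i+1}{n}\right\} n\left(\frac{i-1}{n-1}\right)^d\prod_{k=1}^{j(\gb)-1}\left[1-\left(\frac{i-1}{n-1-k}\right)^d\right]\Di\gb .
\]
For fixed $\gb\in(0,1)$ the indicator tends to $1$, since $i/n\to0$. The prefactor tends to $M^d$, and by the Taylor/Riemann-sum argument of Claim~(\ref{item:goal1}) (the shifts $i-1$ for $i$ and $n-1$ for $n$ are harmless) the product converges to $e^{-\frac{M^d}{d-1}((1-\gb)^{1-d}-1)}$.

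To apply dominated convergence we need a majorant, and this is easier here than in Theorem~\ref{thm:technical}. The prefactor is bounded by $CM^d$ uniformly in $n$, and the product is at most $1$, so the integrand is dominated by the constant $CM^d$, which is integrable on $[0,1]$. Dominated convergence then yields \eqref{eq:notDn}.

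The only step needing some care is the pointwise convergence of the product, in particular that the quadratic error term $\sum_k (i/(n-k))^{2d}$ is $O(1/n)$ for fixed $\gb<1$. This is verbatim the estimate following \eqref{eq:prodexp}, so I expect no real obstacle. The main thing to check is the exact cancellation in the first display, which reduces the kernel to the constant $\big((i-1)/(n-1)\big)^d$ and makes the domination trivial.
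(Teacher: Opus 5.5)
Your proposal is correct and follows the paper's proof essentially step by step. Both arguments use the exact formula from \eqref{eq:aintransitionp} and Lemma~\ref{lemma:aindistr}, in which the factor $\big((n-1-j)/(n-1)\big)^d$ cancels to leave the $j$-independent prefactor $\big((i-1)/(n-1)\big)^d\sim M^d/n$; both then rewrite the sum as an integral over $\gb$ with $j=\floor{\gb n}$, obtain pointwise convergence of the product via Claim~(\ref{item:goal1}), and apply dominated convergence with a bounded majorant. Your use of the exact cancellation, where the paper writes only $\sim$, is if anything slightly cleaner.
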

\begin{proof}[Proof of Lemma~\ref{lemma:notDn}]
 We write $i(M):= \floor{Mn^{\frac{d-1}{d}}}$, and compute analogously to the proof of Lemma~\ref{lemma:firstmoment} that
 \begin{align*}
  \IP\big(i_M\notin D_n\big)
  &= \sum_{j=1}^{n-i(M)} \IP\big(a_{i(M)}(n)=j \mid a_{i(M)}(n-1)=j \big) \cdot \IP\big( a_{i(M)}(n-1)=j \big)\\
  &= \sum_{j=1}^{n-i(M)} \left(\frac{n-1-j}{n-1}\right)^d \left(\frac{i(M)-1}{n-1-j}\right)^d \cdot \prod_{k=1}^{j-1} \left[ 1-\left(\frac{i(M)-1}{n-1-k}\right)^d\right]\\
  &\sim  \left(\frac{i(M)}{n}\right)^d \sum_{j=1}^{n-i(M)} \prod_{k=1}^{j-1} \left[ 1-\left(\frac{i(M)-1}{n-1-k}\right)^d\right].
 \end{align*}
 Observing that $\big(i(M)/n\big)^d \sim \big(Mn^{-1/d}\big)^d = M^d/n$, we set $j=j(\gb)=\floor{\gb n}$ for $0<\gb<1$, and approximate the sum by a suitable integral as in previous proofs. We obtain
 \begin{equation}
  \IP\big(i_M\notin D_n\big)
  \sim M^d \int_0^1 \one\{ 1/n\leq \gb \leq 1-Mn^{-1/d}+1/n \}  \prod_{k=1}^{j(\gb)-1} \left[ 1-\left(\frac{i_M-1}{n-1-k}\right)^d\right] \Di\gb.
 \end{equation}
 Recycling the arguments from \eqref{eq:sumasint} and the following proofs of Claims~(\ref{item:goal1}) and (\ref{item:goal2}) shows that the integrand converges pointwise to the integrand on the right-hand side of \eqref{eq:notDn}; moreover the integrands are bounded by 1. Hence the dominated convergence theorem can be applied, and \eqref{eq:notDn} follows.
\end{proof}

\begin{lemma}\label{lemma:limnotDn}
 We have
 \begin{equation}\label{eq:limnotDn}
  \lim_{M\to\infty} M^d \int_0^1 e^{-\frac{M^d}{d-1}\big((1-\gb)^{1-d}-1\big)} \Di\gb =1.
 \end{equation}
\end{lemma}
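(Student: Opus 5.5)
Since the exponent vanishes at $\gb=0$ and grows as $\gb$ increases, for large $M$ the integral concentrates near $\gb=0$. We therefore rescale $\gb=u/M^d$. Write $h(\gb):=\frac{(1-\gb)^{1-d}-1}{d-1}$. Then $h(0)=0$, $h'(0)=1$, $h$ is increasing and convex on $[0,1)$, and $h(\gb)\ge \gb$. After the substitution the quantity in \eqref{eq:limnotDn} becomes
\begin{equation}
 M^d\int_0^1 e^{-M^d h(\gb)}\Di\gb = \int_0^{M^d} e^{-M^d h(u/M^d)}\Di u .
\end{equation}
The target is to show that this tends to $\int_0^\infty e^{-u}\Di u = 1$.

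The first step is pointwise convergence. For fixed $u>0$ we have $M^d h(u/M^d) = u + O(u^2/M^d)\to u$ by Taylor expansion of $h$ at $0$. Hence the integrand, extended by $0$ for $u>M^d$, converges to $e^{-u}$.

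The second step is domination. Since $h(\gb)\ge \gb$ (by convexity with $h'(0)=1$, or directly because $(1-\gb)^{1-d}\ge 1+(d-1)\gb$ by Bernoulli's inequality), we get $M^d h(u/M^d)\ge u$. So the integrand is bounded by $e^{-u}$, which is integrable on $[0,\infty)$. Dominated convergence then gives the limit $1$.

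There is no real obstacle here. The only point needing care is the uniform lower bound $h(\gb)\ge\gb$ on all of $[0,1)$, which the Bernoulli inequality provides. One can also note the elementary upper bound $M^d\int_0^1 e^{-M^d\gb}\Di\gb\le 1$ as a sanity check: the limit is approached from below.
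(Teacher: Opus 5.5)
Your proposal is correct and follows essentially the same route as the paper: substitute $u=M^d\gb$, obtain pointwise convergence of the integrand to $e^{-u}$ by Taylor expansion, and dominate it by $e^{-u}$ using $(1-x)^{1-d}-1\geq (d-1)x$, then apply dominated convergence. Your justification of that inequality via Bernoulli's inequality (or convexity of $h$ with $h'(0)=1$) is exactly the bound the paper uses.
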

\begin{proof}
 Upon substituting $u=M^d\gb$, we obtain
 \begin{equation}
  M^d \int_0^1 e^{-\frac{M^d}{d-1}\big((1-\gb)^{1-d}-1\big)} \Di\gb
  = \int_0^{M^d} e^{-\frac{M^d}{d-1}\left(\left(1-\frac{u}{M^d}\right)^{1-d} -1 \right)} \Di u.
 \end{equation}
 As $M\to\infty$, we obtain the following pointwise expansion for every $u>0$:
 \begin{equation}
  e^{-\frac{M^d}{d-1}\left(\left(1-\frac{u}{M^d}\right)^{1-d} -1 \right)}
  = e^{-\frac{M^d}{d-1}\left((d-1) \frac{u}{M^d} + O\left(\frac{u^2}{M^{2d}} \right) \right)}
  = e^{-u+o(1)}.
 \end{equation}
 On the other hand, using $(1-x)^{1-d}-1\geq (d-1)x$ we obtain
 \begin{equation}
  \int_0^{\infty} \one\{u\leq M^d\} e^{-\frac{M^d}{d-1}\left(\left(1-\frac{u}{M^d}\right)^{1-d} -1 \right)} \Di u
  \leq \int_0^\infty e^{-u} \Di u = 1.
 \end{equation}
 Thus \eqref{eq:limnotDn} follows by dominated convergence.
\end{proof}

\subsection{Proof of Theorem~\ref{thm:threshold}, upper bound}\label{ssec:Proofupper}
We are now in position to assemble the proof of the upper bound to the threshold in Theorem~\ref{thm:threshold}.

For this, fix $\gep>0$, and write $i(M_1):=\floor{M_1 n^{(d-1)/d}}$. We choose $M_1>0$ large enough such that
\begin{equation}\label{eq:M_1}
 \lim_{n\to\infty} \IP\big(i(M_1) \notin D_n\big) \geq 1-\gep,
\end{equation}
which is possible due to Lemmas~\ref{lemma:notDn} and \ref{lemma:limnotDn}. Given $M_1$, we choose $M_2>0$ large enough such that
\begin{equation}\label{eq:M_2}
 \lim_{n\to\infty} \IP\Big( \exists i\leq M_2n^{\frac{d-1}{d+1}} : a_i\big(i(M_1) \big) =1 \Big) \geq 1-\gep.
\end{equation}
This is possible due to Theorem~\ref{thm:sources}, applied to the scale $M_1 n^{(d-1)/d}$. To be precise, the number of such vertices converges to a Poisson distribution with parameter $(d+1)^{-1}M_1^{-d}M_2^{d+1}$, obtained from a reparametrization of \eqref{eq:sourcePoi}.

We now employ a coupling, as follows: First reveal $G_{i(M_1)}$. We construct the coupled instance of the DAG $G'_{i(M_1)}$ as an identical copy of $G_{i(M_1)}$. Now suppose that in $G_{i(M_1)}=G'_{i(M_1)}$, there exists a smallest source $i_0\leq M_2 n^{(d-1)/(d+1)}$, otherwise we declare that our coupling fails (and it will be convenient to then set $i_0=n$). Afterwards, we construct $G_n$ from $G_{i(M_1)}$ recursively, and $G'_n$ from $G'_{i(M_1)}$ according to the following rules:
\begin{itemize}
 \item For every edge $k\longrightarrow i_0$ revealed in $G_n$, we draw the arrow $k\longrightarrow i(M_1)$ in $G'_n$.
 \item Conversely, for every edge $k\longrightarrow i(M_1)$ revealed in in $G_n$, we draw the arrow $k\longrightarrow i_0$ in $G'_n$.
 \item All other arrows are identical in $G_n$ and $G'_n$.
\end{itemize}
Conditional on the coupling succeeding, both $G_n$ and $G'_n$ will be $d$-DAGs in the sense that every vertex has outdegree $d$ and all arrows point from a larger to a smaller index. Moreover, $G'_n$ will have the same distribution as $G_n$, since at every step, the targets for newly drawn edges are chosen uniformly at random.

In the case of a successful coupling, we have by construction that $i_0\notin D_n$ if and only if $i(M_1)\notin D'_n$, where $D'_n$ denotes the set of descendants of $n$ in $G'_n$. Therefore
\begin{align}\label{eq:upperbdproof}
 \IP\Big( \exists i\leq M_2n^{\frac{d-1}{d+1}}: i\notin D_n\Big)
 &\geq \IP\big( i_0\notin D_n \cap \text{ Coupling succeeds } \big)\notag\\
 &= \IP\big( i(M_1)\notin D'_n \cap \text{ Coupling succeeds }\big)\notag\\
 &= \IP\big(i(M_1)\notin D'_n\big) \cdot \IP\big( \text{ Coupling succeeds }\big),
\end{align}
where we used that the edge set determining if $i(M_1)\in D'_n$ is disjoint from the edge set that determines if the coupling succeeds (the former uses arrows originating in vertices $>i(M_1)$, the latter arrows originating in $\leq i(M_1)$); hence the two events are independent. Thus
\begin{equation}
 \IP\Big( \exists i\leq M_2n^{\frac{d-1}{d+1}}: i\notin D_n\Big) \geq (1-\gep+o(1))^2
\end{equation}
by applying \eqref{eq:M_1} and \eqref{eq:M_2} to \eqref{eq:upperbdproof}, which provides the desired upper bound for the threshold claim, yielding the second part of \eqref{eq:proofstratthreshold}.



\bibliographystyle{plain}
\bibliography{dDAG}

\end{document}